\theoremstyle{plain}
\newtheorem{thrm}{Theorem}[section]
\newtheorem{prpstn}[thrm]{Proposition}
\newtheorem{crllry}[thrm]{Corollary}
\newtheorem{dfntn}{Definition}
\newtheorem{rmk}{Remark}
\newtheorem{cnjctr}[thrm]{Conjecture}
\numberwithin{equation}{section}
\newcommand\rev[1]{\overset{{}_{\shortleftarrow}}{1}}
\newcommand{\Mod}[1]{\ (\mathrm{mod}\ 1)}
\renewcommand{\mod}[1]{\mathrm{mod}\ 1}
\newcommand{\lcom}[1]{{\color{red}{Larry: 1}} }
\newcommand{\jcom}[1]{{\color{red}{James: 1}} }
\renewcommand{\k}{{\boldsymbol{k}}}
\title[Mahler-type behavior of a highly structured infinite product]{Transcendence and algebraic independence of a family of \(p\)-adic valuation generating functions}
\author{Kelvin Lam}
\email{kelvin.lam.research.math@gmail.com}
\date{}
\begin{document}

\begin{abstract}
We show that \(
T_p(z)=\prod_{j=1}^{\infty}(1-z^{p^{j}})^{-1/p^{j}}
\)
 is transcendental over \(\overline{\mathbb{Q}}(z)\), and establish the transcendence of its values at nonzero algebraic points inside the unit disk. Furthermore, we obtain an algebraic independence result for multiplicatively independent algebraic arguments. In summary, this paper extends Mahler's method beyond the classical automatic setting by studying the function \(T_p(z)\), whose coefficients are governed by the unbounded arithmetic function \(\nu_p(n)\).
\end{abstract}

\keywords{Mahler functions; Mahler's method; transcendence theory}

\maketitle

\section{Introduction}
Although Mahler’s method has achieved considerable success in the study of functions generated by bounded automatic sequences—whose coefficients exhibit strong combinatorial computability—its application to functions whose coefficients are defined by more general arithmetic functions remains far less systematic. Such functions often lie outside the automatic class, yet their coefficients may still display distinctive and highly structured forms of regularity, making them a natural frontier for extending the scope of Mahler’s theory.

This transition from the automatic to the arithmetic can be contextualized through classical examples. In particular, the foundational work of Mahler \cite{Mahler1929_ArithmetischeEigenschaften_I, Mahler1930_ArithmetischeEigenschaften_II, Mahler1930_UeberVerschwinden} and the systematic treatment in Nishioka's monograph  \cite{Nishioka1996_MahlerFunctionsBook} thoroughly investigated functions such as
\[
F(z):=\prod_{n=0}^{\infty} \left(1 - z^{2^n}\right),
\qquad 
G(z) := \sum_{n \geq 1} \nu_2(2n) z^n = \sum_{n=0}^{\infty} \frac{z^{2^n}}{1 - z^{2^n}}, 
\]
which are prototypical examples of functions satisfying functional equations of the form \(f(\Omega \mathbf{z}) = R(\mathbf{z}, f(\mathbf{z}))\), as systematically treated in \cite{Adamczewski2019}. While these classical cases exhibit strong connections to the 2-adic structure and demonstrate the interplay between automatic and arithmetic properties, they represent an intermediate stage in the development of Mahler's theory—bridging the gap between purely automatic sequences and more general arithmetic functions.

In this paper, we advance this lineage by studying the infinite product
\[
T_{p}(z):=\prod_{j=1}^{\infty}\left(1-z^{p^{j}}\right)^{-1/p^{j}},
\]
which satisfies a Mahler equation of the broader type \(P(\mathbf{z},f(\mathbf{z}),f(\Omega\mathbf{z}))=0\) as defined in \cite{Adamczewski2019}. This function serves as an ideal testing ground for extending Mahler's method further into the arithmetic domain, as it combines the rigidity of a Mahler-type functional equation with coefficients explicitly governed by the unbounded arithmetic function \(\nu_p(n)\). Its structure is distinctive in several key respects:

\begin{itemize}
\item \textbf{Structural:} Its definition is governed entirely by iterated powers of the prime $p$, making it a pure $p$-adic dynamical system in analytic, combinatorial, and arithmetic terms.

\item \textbf{Analytic:} As shown in Appendix~\ref{analyticity}, the unit circle is a natural boundary, which immediately excludes $T_p(z)$ from being rational or algebraic, and strongly suggests its inherent transcendence.

\item \textbf{Arithmetic:} Most notably, its logarithm expands as
\[
\log T_p(z) = \sum_{n=1}^{\infty} \frac{\nu_p(n)}{n} z^n,
\]
explicitly linking its analytic behavior to the $p$-adic valuation $\nu_p(n)$. Although the resulting coefficient sequence $\nu_p(n)/n$ is bounded, it is derived from a fundamentally unbounded arithmetic function. Thus, $T_p(z)$ provides a clear example of a function whose coefficients are governed by a highly structured, non-automatic arithmetic rule, while still satisfying a clean Mahler-type functional equation.
\end{itemize}

A fundamental feature of $T_p(z)$ is the functional equation
\[
T_p(z)^p = \frac{T_p(z^p)}{1 - z^p},
\]
which places it squarely within the class of Mahler functions. This relation naturally suggests the use of Mahler’s method in studying the arithmetic nature of its values. Yet, despite the simplicity and natural appeal of both the product and the functional equation, a systematic analysis of its arithmetic properties has, until now, been absent from the literature.

This gap leads us to the central questions of this work:

\emph{When the coefficients of a Mahler function are defined by a non-automatic but arithmetic rule—even if the resulting sequence is bounded—to what extent does Mahler’s method remain effective? Do the values of such a function exhibit new arithmetic phenomena distinct from those arising from classical automatic sequences?}

In this paper, we answer these questions in the affirmative. We not only establish the transcendence of individual values of $T_p(z)$, but—by developing a detailed analysis of its coefficient recurrences and functional equations—we also prove algebraic independence results in the contexts of multiple evaluation points, as captured by the following theorems:

\begin{thrm}\label{self-made1}
Let $p$ be a prime and let $\alpha$ be a non-zero algebraic number with $|\alpha|<1$. Then the value
\[
T_p(\alpha)
\]
is transcendental over $\overline{\mathbb{Q}}$.
\end{thrm}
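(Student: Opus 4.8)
The plan is to apply the classical Mahler method in one variable to the functional equation $T_p(z)^p = T_p(z^p)/(1-z^p)$, which expresses $T_p(z^p)$ as a rational function of $z$ and $T_p(z)$. I would set $f(z) := T_p(z)$, which is analytic on the unit disk with a natural boundary there (Appendix~\ref{analyticity}), hence $f$ is not a rational function of $z$. The first step is to verify that $f$ is transcendental over $\overline{\mathbb{Q}}(z)$: if $f$ satisfied an algebraic equation over $\overline{\mathbb{Q}}(z)$ of minimal degree $d$, then iterating the functional equation $z \mapsto z^p$ would produce from it another algebraic equation for $f$ of degree $d$ whose conjugates are constrained by the relation, and a standard argument (as in Nishioka's monograph, using that the transformation $z\mapsto z^p$ is ``invertible'' on power series and tracking leading coefficients) forces $d=1$, contradicting the natural-boundary obstruction. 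This gives the hypothesis needed to run Mahler's transcendence-of-values theorem.

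The core of the argument is then the auxiliary-function / zero-estimate machinery. Given the algebraic point $\alpha$ with $0<|\alpha|<1$, I would consider the sequence $\alpha_k := \alpha^{p^k}$, so that $\alpha_k \to 0$ and each $\alpha_k$ is algebraic of controlled height and degree. Assume for contradiction that $T_p(\alpha)$ is algebraic. Using the functional equation repeatedly, $T_p(\alpha_k) = T_p(\alpha)^{p^k}\prod_{j=0}^{k-1}(1-\alpha^{p^{j+1}})^{1/p^{k-j}}$ — more precisely one tracks that $T_p(\alpha_k)$ lies in a number field of bounded degree with height growing at most geometrically in $p^k$. One builds, for a large parameter $N$, a nonzero polynomial $E(z,w)=\sum_{i\le N}\sum_{j\le N} c_{ij} z^i w^j$ with coefficients in $\mathbb{Z}$ of controlled size such that the analytic function $\Phi(z) := E(z, T_p(z))$ has a zero of high order $M \gg N^2$ at $z=0$ (a linear-algebra / pigeonhole count: $\sim N^2$ unknowns, $\sim M$ linear conditions). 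Since $T_p$ is transcendental, $\Phi \not\equiv 0$, so $\Phi(z) = c\, z^{M'} + \cdots$ with $M' \ge M$. Evaluating at $\alpha_k$ for a suitable $k$ (chosen so that $|\alpha_k|$ is small but $\alpha_k$ has not-too-large height), the upper bound $|\Phi(\alpha_k)| \le C^{N}|\alpha_k|^{M}$ from analyticity competes against the Liouville lower bound $|\Phi(\alpha_k)| \ge (\text{denominator} \cdot \text{house})^{-c\,\deg}$; balancing the parameters $N$, $M$, $k$ yields $\Phi(\alpha_k)=0$ for all large $k$, hence $\alpha_k$ is a root of the fixed nonzero polynomial $\Phi$ for infinitely many $k$, which is absurd since $\alpha_k\to 0$ and the $\alpha_k$ are distinct.

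The main obstacle, and the point where the non-automatic nature of the coefficients enters, is the height/size estimate for $T_p(\alpha_k)$ and more generally the arithmetic control of the Taylor coefficients of $T_p$. Because $\log T_p(z) = \sum \nu_p(n) z^n / n$, the coefficients of $T_p$ itself are rational numbers whose denominators involve powers of $p$ (and the $n$'s), so one must prove a clean bound of the shape: the first $M$ Taylor coefficients of $T_p$ have a common denominator dividing $p^{O(M)} \cdot \operatorname{lcm}(1,\dots,M)^{O(1)}$, and house bounded by $C^M$. This is exactly where the structural fact that everything is governed by powers of $p$ must be exploited — presumably via the product formula $\prod_j (1-z^{p^j})^{-1/p^j}$ expanded through the generalized binomial theorem, controlling the $p$-adic valuations of the multinomial-type coefficients that appear. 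Once this denominator-and-size lemma is in hand, the rest is the standard Mahler pigeonhole-plus-Liouville scheme; I expect the delicate bookkeeping of the $p$-power denominators against the geometric growth $|\alpha|^{p^k}$ to be the technically demanding part, but not a conceptual barrier.
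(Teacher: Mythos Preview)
Your outline is essentially the paper's own argument: build an auxiliary polynomial $E(z,T_p(z))$ vanishing to order $\asymp N^2$ at the origin, iterate along $\alpha^{p^k}$, and collide the analytic decay against a Liouville lower bound. The one place you are overcomplicating things is the denominator control: Proposition~\ref{theorem1.6} already shows $t_p(n)\in\mathbb{Z}_{(p)}$ for every~$n$, so the common denominator of the first $M$ Taylor coefficients is a pure power of~$p$---no $\lcm(1,\dots,M)$ factor appears at all---and this is precisely what lets the paper work entirely in $\mathcal{O}_{K,(p)}$ and record the clean bound $h(E_P(\alpha^{p^k}))\le c_1 P p^k$. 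Two minor slips to fix: the vanishing order is $M\asymp N^2$, not $M\gg N^2$ (you only have $(N+1)^2$ free coefficients); and in your endgame $\Phi$ is an analytic function, not a polynomial, so the contradiction comes from the identity theorem (the zeros $\alpha_k\to 0$ force $\Phi\equiv 0$, against the transcendence of $T_p$ over $\overline{\mathbb{Q}}(z)$)---which is in fact a cleaner closure than the paper's direct inequality, since the latter tacitly assumes $E_P(\alpha^{p^k})\neq 0$.
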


\begin{thrm}\label{self-made2}
Let $p$ be a prime and let $\alpha_1,\alpha_2,\dots,\alpha_m$ be non-zero algebraic numbers with $|\alpha_i|<1$ for all $i$, and assume that $\alpha_1,\alpha_2,\dots,\alpha_m$ are multiplicatively independent. Then the values
\[
T_p(\alpha_1),\ T_p(\alpha_2),\ \dots,\ T_p(\alpha_m)
\]
are algebraically independent over $\overline{\mathbb{Q}}$.
\end{thrm}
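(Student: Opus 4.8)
The plan is to deduce Theorem~\ref{self-made2} from Theorem~\ref{self-made1} by combining the functional equation $T_p(z)^p = T_p(z^p)/(1-z^p)$ with a Mahler-type algebraic independence criterion (in the style of Nishioka's results building on Kubota and Loxton--van der Poorten). First I would record the functional equation in its iterated form: setting $f(z) = T_p(z)$, one has, for every $k\ge 1$,
\[
f(z)^{p^k} = \frac{f\!\left(z^{p^k}\right)}{\prod_{j=1}^{k}\left(1-z^{p^j}\right)^{p^{k-j}}},
\]
so that $f$ satisfies a first-order nonlinear Mahler equation under the transformation $z\mapsto z^p$. The idea is then to set up the $m$-variable system for $f(\alpha_1),\dots,f(\alpha_m)$, where the relevant transformation matrix is the scalar multiplication by $p$ acting on exponents, and to invoke the general transcendence-measure/algebraic-independence machinery for solutions of Mahler functional equations under a common multiplicatively-structured iteration.

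The key steps, in order, are as follows. \textbf{Step 1:} Verify the analytic hypotheses — $f$ is holomorphic and nonvanishing on the open unit disk (from the product expansion and Appendix~\ref{analyticity}), each $\alpha_i$ lies in the disk with $|\alpha_i^{p^k}|\to 0$, and no $\alpha_i^{p^k}$ is a pole, so the iterates stay in the regime where the functional equation is valid. \textbf{Step 2:} Establish that $f(z)$ is transcendental over $\overline{\mathbb Q}(z)$ (stated in the abstract via the natural boundary argument), which is the function-field input needed for the criterion. \textbf{Step 3:} Show that the multiplicative independence of $\alpha_1,\dots,\alpha_m$ guarantees the "independence of orbits" condition required by the Mahler criterion — concretely, that there is no nontrivial multiplicative relation $\prod \alpha_i^{n_i} = \zeta$ with $\zeta$ a root of unity that would let one algebraic relation among the $f(\alpha_i)$ propagate — and more precisely that the only algebraic relations over $\overline{\mathbb Q}(z_1,\dots,z_m)$ among $f(z_1),\dots,f(z_m)$ respected by the diagonal transformation $(z_i)\mapsto(z_i^p)$ are trivial. \textbf{Step 4:} Apply the algebraic independence theorem for such systems to conclude that $f(\alpha_1),\dots,f(\alpha_m)$ are algebraically independent over $\overline{\mathbb Q}$, using Theorem~\ref{self-made1} as the base case $m=1$ and as the ingredient that rules out each individual value being algebraic.

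The main obstacle I anticipate is Step~3: proving the algebraic independence of the functions $f(z_1),\dots,f(z_m)$ over the rational function field in several variables, compatibly with the Mahler transformation, rather than merely the transcendence of a single $f(z)$. The natural approach is to pass to logarithms, $\log f(z) = \sum_{n\ge 1}\frac{\nu_p(n)}{n}z^n$, reducing a multiplicative relation $\prod_i f(z_i)^{c_i} \in \overline{\mathbb Q}(z_1,\dots,z_m)$ to a linear relation $\sum_i c_i \log f(z_i) \in \overline{\mathbb Q}(z_1,\dots,z_m)$ plus a logarithm of a rational function; one then extracts a contradiction from the growth/irregularity of the coefficients $\nu_p(n)/n$ along the diagonal, using the functional equation to control how the putative relation transforms under $z_i\mapsto z_i^p$ and a descent/weight argument to force all $c_i=0$. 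A secondary technical point is ensuring the criterion's non-degeneracy hypothesis on the matrix of the transformation — here trivially satisfied since multiplication by $p$ has all eigenvalues equal to $p>1$ and the orbits $\{\alpha_i^{p^k}\}$ escape to $0$ — and checking that multiplicative independence of the $\alpha_i$ exactly matches the hypothesis that the transformation does not identify distinct coordinate orbits, which is where the arithmetic of $\nu_p$ interacts cleanly with the dynamics.
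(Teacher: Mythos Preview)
Your approach differs substantially from the paper's. The paper does \emph{not} reduce to a general Nishioka/Loxton--van der Poorten criterion, nor does it establish the functional algebraic independence of $T_p(z_1),\dots,T_p(z_m)$ as a separate lemma. Instead it runs Mahler's method directly in $m$ variables, essentially following Kubota: assume the values are algebraically dependent, build a multivariate auxiliary polynomial $E_P(z_1,\dots,z_m)=\sum_{\mathbf j} a_{\mathbf j}(\mathbf z)\prod_i T_p(z_i)^{j_i}$ with $a_{\mathbf j}\in\mathcal O_{K,(p)}[z_1,\dots,z_m]$ via Siegel's lemma so that it vanishes to total order $P^2$, iterate under $(z_i)\mapsto(z_i^p)$ using the functional equation, and compare the analytic upper bound $|E_P(\boldsymbol\Omega^k\boldsymbol\alpha)|\ll \rho^{P^2p^k}$ with the Liouville lower bound coming from the height estimate $h(E_P(\boldsymbol\Omega^k\boldsymbol\alpha))\le c_1Pp^k$ (the $p$-integrality of $t_p(n)$ and of the $a_{\mathbf j}$ is what keeps this height linear in $P$). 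Theorem~\ref{self-made1} is not used as an input.

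Your route is in principle legitimate but the gap is real: the black box you want to invoke (a Nishioka-type specialization theorem) is formulated in the literature primarily for \emph{linear} Mahler systems, whereas $T_p(z)^p=T_p(z^p)/(1-z^p)$ is algebraic of degree $p$ in $f$. The paper itself flags exactly this nonlinearity as the reason the standard purity/vanishing theorems do not apply off the shelf (see the discussion around Conjectures~\ref{self-made3}--\ref{self-made4}). So your Step~4 cannot simply be ``apply the criterion''; you would first have to either linearize (which here would mean adjoining a $p$-th root and tracking the resulting rational factor, not obviously compatible with the multivariable specialization hypotheses) or locate/prove a nonlinear analogue. Your Step~3, moreover, is doing essentially the same work the paper's auxiliary-function construction does implicitly --- and your sketch (pass to $\log T_p$ and argue with the $\nu_p(n)/n$ coefficients) would still need a genuine descent argument under $(z_i)\mapsto(z_i^p)$ to rule out relations, which you have not supplied. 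Finally, calling this a deduction of Theorem~\ref{self-made2} from Theorem~\ref{self-made1} is a misnomer: nothing in your outline actually uses the $m=1$ case.
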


\begin{rmk}
The proofs of Theorems \ref{self-made1} and \ref{self-made2} are adaptations of the method developed in \cite{Kubota1975_AlgebraicIndependence_Product}, applied to the specific functional equation of $T_p(z)$. This approach leverages the structure of the functional equation to construct auxiliary functions and derive the necessary estimates for transcendence and algebraic independence. 
\end{rmk}

While Theorems \ref{self-made1} and \ref{self-made2} establish strong transcendence and algebraic independence results for values of $T_p(z)$ under certain conditions, the proofs of the following conjectures remain beyond the reach of current techniques.

\begin{cnjctr}\label{self-made3}
Let $p_1,p_2,\dots,p_m$ be distinct primes and let $\alpha$ be a non-zero algebraic number with $|\alpha|<1$. Then the values
\[
T_{p_1}(\alpha),\ T_{p_2}(\alpha),\ \dots,\ T_{p_m}(\alpha)
\]
are algebraically independent over $\overline{\mathbb{Q}}$.
\end{cnjctr}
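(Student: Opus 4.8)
\emph{Sketch of a possible approach.} The natural plan mirrors the proof of Theorems~\ref{self-made1} and~\ref{self-made2}: first show that $T_{p_1}(z),\dots,T_{p_m}(z)$ are algebraically independent over $\overline{\mathbb{Q}}(z)$, and then transfer this to the values $T_{p_i}(\alpha)$ by a Mahler-type argument. The new feature is that each $T_{p_i}$ is a $p_i$-Mahler function, so the $m$ functions are governed by the \emph{distinct} substitutions $z\mapsto z^{p_i}$, and the single-substitution machinery of \cite{Kubota1975_AlgebraicIndependence_Product} (cf.\ also \cite{Nishioka1996_MahlerFunctionsBook}) used for Theorems~\ref{self-made1} and~\ref{self-made2} no longer applies verbatim. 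A convenient linearizing device is the logarithmic derivative
\[
z\,\frac{d}{dz}\log T_{p}(z)\;=\;\sum_{n\ge1}\nu_{p}(n)\,z^{n}\;=\;\sum_{j\ge1}\frac{z^{p^{j}}}{1-z^{p^{j}}}\;=:\;D_{p}(z),
\]
which is holomorphic on the open unit disk, is transcendental over $\overline{\mathbb{Q}}(z)$, satisfies $D_{p}(z)-D_{p}(z^{p})=\dfrac{z^{p}}{1-z^{p}}$, and blows up along the radius to any primitive $p^{j}$-th root of unity while $D_{q}$ stays bounded there for every prime $q\neq p$.

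For the function-field step, this last property already shows that no nontrivial monomial $\prod_i T_{p_i}(z)^{e_i}$ (with $e_i\in\mathbb{Z}$ not all zero) is algebraic over $\overline{\mathbb{Q}}(z)$: its logarithmic derivative $\sum_i e_i D_{p_i}(z)/z$ would then be algebraic, hence have only finitely many singularities on the closed disk, whereas it blows up along the radius to every primitive $p_k^{j}$-th root of unity with $e_k\neq0$. To upgrade this from multiplicative to full algebraic independence over $\overline{\mathbb{Q}}(z)$ one would apply the Kolchin--Ostrowski theorem over the differential field generated over $\overline{\mathbb{Q}}(z)$ by the $D_{p_i}$ (which contains each $T_{p_i}'/T_{p_i}$): there, algebraic dependence of the $T_{p_i}$ is equivalent to some nontrivial monomial in the $T_{p_i}$ lying in that field, and one would exclude this by the same pole analysis carried out inside the larger base field. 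Granting algebraic independence of the functions, the transfer to values would run along familiar lines: form the commuting semigroup of substitutions $z\mapsto z^{p_1^{a_1}\cdots p_m^{a_m}}$, use $T_{p_i}(z)^{p_i}(1-z^{p_i})=T_{p_i}(z^{p_i})$ to re-express transformed values in terms of the $T_{p_i}(z)$, build an auxiliary polynomial in the $T_{p_i}(z)$ with algebraic coefficients vanishing to high order at $z=0$, specialize at $z=\alpha$, and contradict a lower bound for the order of vanishing along the orbit of $\alpha$.

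The main obstacle is precisely that last ingredient. A working transfer requires a \emph{multiplicity (zero) estimate} valid simultaneously for the family $\{T_{p_i}\}$ under the entire semigroup $\{z\mapsto z^{p_1^{a_1}\cdots p_m^{a_m}}\}$, and the available Mahler-type zero estimates are tailored to iterating a \emph{single} substitution; interleaving several multiplicatively independent substitutions is not covered, and what one really needs is a quantitative version, at the level of values, of the rigidity phenomena that make the function-field step possible — which current methods do not provide. Re-routing through the numbers $\log T_{p_i}(\alpha)=-\sum_{j\ge1}p_i^{-j}\log(1-\alpha^{p_i^{j}})$ and an exponential algebraic-independence principle does not help either, since it would demand a Schanuel-type input equally out of reach. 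Secondary, more technical, difficulties include uniform control of the denominators and archimedean sizes of the Taylor coefficients of $T_{p_i}$ across the $m$ different $p_i$-adic recurrences, and keeping the auxiliary construction nondegenerate.
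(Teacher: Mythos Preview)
This statement is labeled a \emph{conjecture} in the paper, and the paper gives no proof: it explicitly says Conjectures~\ref{self-made3} and~\ref{self-made4} ``remain beyond the reach of current techniques,'' identifying as the core obstruction the absence of a zero-estimate theory for the nonlinear Mahler systems that arise when several distinct substitutions $z\mapsto z^{p_i}$ are in play simultaneously. Your proposal does not claim a proof either, and your diagnosis of the bottleneck---no available multiplicity estimate for the auxiliary function under the full commuting semigroup $\{z\mapsto z^{p_1^{a_1}\cdots p_m^{a_m}}\}$---is exactly the obstacle the paper names. So on the main point, you and the paper are in agreement: the conjecture is open, and the missing ingredient is a multi-substitution zero estimate for algebraic (as opposed to linear) Mahler equations.

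Where you go beyond the paper is in sketching the function-field step. The paper says nothing about the algebraic independence of $T_{p_1}(z),\dots,T_{p_m}(z)$ over $\overline{\mathbb{Q}}(z)$, whereas you outline an approach via the logarithmic derivatives $D_{p_i}(z)=\sum_{j\ge1}z^{p_i^{j}}/(1-z^{p_i^{j}})$, their disjoint radial singularity sets at $p_i$-power roots of unity, and a Kolchin--Ostrowski reduction to monomial relations. That discussion is genuine additional content not present in the paper; the multiplicative-independence part looks essentially sound, while the upgrade to full algebraic independence over the larger differential field generated by the $D_{p_j}$ would require the pole analysis to survive that adjunction, which you flag but do not carry out. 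In any case, as you yourself conclude, even granting function-field independence the transfer to values at $\alpha$ remains blocked by the same missing zero estimate, so nothing in your sketch closes the gap the paper leaves open.
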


\begin{cnjctr}\label{self-made4}
Let $p_1,p_2,\dots,p_m$ be distinct primes and let $\alpha_1,\alpha_2,\dots,\alpha_m$ be non-zero algebraic numbers with $|\alpha_i|<1$, and assume that $\alpha_1,\alpha_2,\dots,\alpha_m$ are multiplicatively independent. Then the values
\[
T_{p_1}(\alpha_1),\ T_{p_2}(\alpha_2),\ \dots,\ T_{p_m}(\alpha_m)
\]
are algebraically independent over $\overline{\mathbb{Q}}$.
\end{cnjctr}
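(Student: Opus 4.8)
The plan is to recast Conjecture~\ref{self-made4} as a statement about the values of a single \emph{multivariate} Mahler system. Introduce independent variables $z_1,\dots,z_m$ and the vector function $\mathbf{F}(z_1,\dots,z_m)=\bigl(T_{p_1}(z_1),\dots,T_{p_m}(z_m)\bigr)$; each component satisfies the algebraic functional equation $T_{p_i}(z_i)^{p_i}(1-z_i^{p_i})=T_{p_i}(z_i^{p_i})$, so $\mathbf{F}$ obeys a nonlinear functional system under the monomial transformation $\Omega\colon(z_1,\dots,z_m)\mapsto(z_1^{p_1},\dots,z_m^{p_m})$, i.e.\ under the diagonal integer matrix $D=\mathrm{diag}(p_1,\dots,p_m)$ acting on the exponent lattice $\mathbb{Z}^m$. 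Distinctness of the primes matters structurally: $p_i^{k}=p_j^{\ell}$ only if $i=j$ and $k=\ell$, so the $m$ clocks $p_i^{k}$ are pairwise incomparable, which forces a genuinely anisotropic analysis. Since $|\alpha_i|<1$, the orbit $\Omega^{k}(\balpha)$ of $\balpha=(\alpha_1,\dots,\alpha_m)$ tends to the origin and stays entirely off the singular locus $\bigcup_i\{z_i^{p_i}=1\}$, while $D$ has all eigenvalues $\geq 2$; thus $(\Omega,\balpha)$ sits in admissible position for the several-variable Mahler machinery.

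Granting the set-up, the proof has two parts. \textbf{Function-field algebraic independence.} One must show $T_{p_1}(z_1),\dots,T_{p_m}(z_m)$ are algebraically independent over $\overline{\mathbb{Q}}(z_1,\dots,z_m)$. Since the $i$-th component involves only $z_i$, this reduces---by specialising the other variables to generic algebraic values and inducting on $m$, tracking transcendence degrees---to the single-variable fact that each $T_{p_i}(z_i)$ is transcendental over $\overline{\mathbb{Q}}(z_i)$, which is supplied by the natural-boundary result of Appendix~\ref{analyticity}. \textbf{Transcendence transfer.} Following the Kubota-style scheme of Theorems~\ref{self-made1} and~\ref{self-made2}: for a large parameter $N$, construct a nonzero $Q\in\overline{\mathbb{Q}}[X_1,\dots,X_m]$ of controlled degree and height such that $E(\mathbf{z})=Q\bigl(T_{p_1}(z_1),\dots,T_{p_m}(z_m)\bigr)$ vanishes to high order at the origin; evaluate $E$ along the orbit $\Omega^{k}(\balpha)$, use the functional equations to relate $E(\Omega^{k}\balpha)$ to $E(\balpha)$ up to an explicit algebraic factor, and play the archimedean size of the nonzero algebraic number $E(\Omega^{k_0}\balpha)$ (with $k_0$ of order $\log N$) against a Liouville lower bound, letting $N\to\infty$. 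This needs (i) upper bounds for $|E(\Omega^{k}\balpha)|$ and for the heights and denominators of the $T_{p_i}(\Omega^{k}\balpha)$, coming from the decay $|\alpha_i|^{p_i^{k}}\to 0$ and from iterating the functional equations, and (ii) a non-vanishing lemma guaranteeing $E(\Omega^{k}\balpha)\neq 0$ for infinitely many $k$, for which the multiplicative independence of $\alpha_1,\dots,\alpha_m$ should be the decisive hypothesis, ruling out a degeneration of the orbit that would force $E$ to vanish along it.

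The main obstacle---and the reason the conjecture is presently out of reach---is exactly the non-vanishing lemma~(ii) in this multivariate \emph{anisotropic} regime. In Theorems~\ref{self-made1} and~\ref{self-made2} the transformation is scalar ($z\mapsto z^{p}$, resp.\ $(z_1,\dots,z_m)\mapsto(z_1^{p},\dots,z_m^{p})$), and Kubota's descent controls the zeros of the auxiliary function by applying one and the same functional equation repeatedly; with $D=\mathrm{diag}(p_1,\dots,p_m)$ the $m$ equations advance at the unequal rates $p_i^{k}$, the orbit $(\alpha_1^{p_1^{k}},\dots,\alpha_m^{p_m^{k}})$ traces an arithmetically rigid, very sparse subset of the polydisc, and excluding persistent vanishing of $Q\bigl(T_{p_1}(z_1),\dots,T_{p_m}(z_m)\bigr)$ along it would require either a Loxton--van der Poorten type description of the $\Omega$-invariant subvarieties that meet the orbit, or a strong zero estimate for the full $m$-fold system---neither of which is available. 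Matters are compounded by nonlinearity: each $T_{p_i}$ equation is algebraic of degree $p_i$, not linear, so the sharpest transfer theorems for linear multivariate Mahler systems do not apply off the shelf, and the tempting passage to $\log T_{p_i}(z_i)=\sum_{n\geq 1}\nu_{p_i}(n)n^{-1}z_i^{n}$---which does satisfy the \emph{linear} relation $\log T_{p_i}(z_i^{p_i})=p_i\log T_{p_i}(z_i)+\log(1-z_i^{p_i})$---fails, because the exponential map destroys algebraicity over $\overline{\mathbb{Q}}(z_i)$ and one cannot transport the resulting algebraic independence back to $T_{p_i}(\alpha_i)=\exp\!\bigl(\log T_{p_i}(\alpha_i)\bigr)$. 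Establishing the zero estimate~(ii) uniformly across the distinct prime bases $p_1,\dots,p_m$ is thus the crux on which Conjecture~\ref{self-made4} turns.
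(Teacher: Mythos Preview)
Your proposal is not a proof, and you explicitly acknowledge this: you correctly identify the non-vanishing lemma (your item~(ii)) as the missing ingredient, and you explain clearly why the anisotropic, nonlinear nature of the system---distinct prime bases $p_1,\dots,p_m$ acting at incommensurable rates, with each $T_{p_i}$ satisfying an algebraic rather than linear Mahler equation---places the required zero estimate beyond current technology. This is not a gap in your reasoning; it is an accurate diagnosis of an open problem.

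The paper takes exactly the same view. Conjecture~\ref{self-made4} is presented there as a conjecture, not a theorem, and the paper offers no proof. Its discussion of the obstruction is essentially identical to yours: it points to the fact that $T_p(z)$ satisfies an \emph{algebraic} Mahler equation rather than a linear one, notes that the available tools (transcendence degree theorems, vanishing theorems, purity theorems) do not apply, and states that ``the underlying obstacle stems from the lack of a general zero estimate theory for such nonlinear systems.'' Your write-up is somewhat more detailed---you spell out the monomial transformation $\Omega$ with matrix $D=\mathrm{diag}(p_1,\dots,p_m)$, sketch the function-field independence step, and explain why passing to $\log T_{p_i}$ does not help---but the substance of your assessment and the paper's coincide.
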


A fundamental limitation lies in the fact that $T_p(z)$ satisfies an \emph{algebraic} Mahler equation---as opposed to a \emph{linear} one---which introduces significant complications in the analysis of multi-prime and multi-point dependencies. Despite the availability of powerful tools such as transcendence degree theorems \cite{Philippon1986}, vanishing theorems \cite{DenefLipshitz1984, Nishioka1994_SunitEquations}, and purity theorems \cite{AdamczewskiFaverjon2018, AdamczewskiFaverjon2018_PartII} in related contexts, these methods prove inadequate for establishing the algebraic independence of $T_p(z)$ under the conditions of Conjectures \ref{self-made3} and \ref{self-made4}. The underlying obstacle stems from the lack of a general zero estimate theory for such nonlinear systems, which prevents the direct extension of the methods successfully employed in Theorems \ref{self-made1} and \ref{self-made2}. These conjectures thus represent a challenging frontier in the extension of Mahler's method to nonlinear and multi-parameter settings.

Although these conjectures currently lie beyond the reach of existing methods, they highlight the profound potential of extending Mahler's theory and underscore the significance of the results already obtained. Our work demonstrates that $T_p(z)$ serves as a sharp prototype in this extended framework, bridging automatic and arithmetic structures, maintaining the power of Mahler's method for non-automatic functions with coefficients like $\nu_p(n)$, and establishing multi-parameter algebraic independence. Thus, $T_p(z)$ provides a clean model for further Mahler-type analysis and opens a new direction for future work.

To establish these arithmetic results, a detailed understanding of the function's analytic and combinatorial nature is essential. We thus begin by examining its fundamental properties.

\subsection{From Exponential Sum to Power Series}By analyzing the convergence of the infinite product (see Appendix \ref{analyticity}), one verifies that \(T_p(z)\) is analytic in the open disk \(|z|<1\), which provides the foundation for the subsequent power series expansion and the derivation of the functional equation.

For \(|z|<1\), the infinite product defining \(T_p(z)\) converges absolutely. Hence
\[
\log T_p(z)
= - \sum_{j=1}^{\infty} \frac{1}{p^j} \log\bigl(1-z^{p^j}\bigr)
= \sum_{j=1}^{\infty} \frac{1}{p^j}
      \sum_{m=1}^{\infty} \frac{z^{mp^j}}{m},
\]
where the interchange of the sum and the logarithm is justified by absolute convergence.

This double series encodes, in a compact analytic form, the interaction between the iterates of the map \(z \mapsto z^p\) and the corresponding weights \(p^{-j}\), which is at the heart of the structure of \(T_p(z)\).

At this stage, we may interchange the order of summation, which is justified by absolute convergence in a sufficiently small neighborhood of the origin. Writing \(n = k \cdot p^j\), we then arrive at the following representation:
\begin{equation}\label{$p$-adic}
\log T_p(z) = \sum_{n=1}^{\infty} \frac{\nu_p(n)}{n} z^n,
\end{equation}
where \(\nu_p(n)\) denotes the \(p\)-adic valuation of \(n\), that is, the exponent of \(p\) in the prime factorization of \(n\). Note that the $p$-adic valuation function \( \nu_p(n) \) is not $p$-automatic since it is unbounded. However, it belongs to the class of $p$-regular sequences, as it satisfies the recursive relations
\[
\nu_p(n)= \nu_p(pn) - 1.
\]
This formula \ref{$p$-adic} makes explicit the fact that the powers of \(p\) governing the original infinite product are now reflected in the coefficients of the exponential generating series through the \(p\)-adic valuation function.

Consequently, exponentiating both sides yields 
\[T_p(z) = \exp\left( \sum_{n=1}^{\infty} \frac{\nu_p(n)}{n} z^n \right)
      = \sum_{n=0}^{\infty} t_p(n) z^n.
\]
This power series representation not only confirms the analyticity of \(T_p(z)\) in a neighborhood of the origin, but also provides a direct link between its analytic form and the arithmetic nature of the coefficient sequence \(\{t_p(n)\}_{n \ge 0}\), which will be further investigated in the subsequent sections.

\subsection{Arithmetic Properties of the Coefficients \(t_p(n)\)}
The analytic behaviour of \(T_p(z)\) is controlled by the arithmetic of its coefficients \(t_p(n)\). 
Although the infinite product and logarithmic series offer a global view, the functional equation
\(z \mapsto z^p\) induces explicit recursions for \(t_p(n)\), revealing a rigid self-similar structure.
Remarkably, despite the fractional exponents in the product definition, the coefficients are rational
with denominators supported only at \(p\), exposing the intrinsic \(p\)-adic nature of \(T_p(z)\).

This arithmetical rigidity is already hinted at by the logarithmic expansion
\[
\log T_p(z)=\sum_{n=1}^{\infty}\frac{\nu_p(n)}{n}z^n.
\]
We apply the substitution \(z\mapsto z^p\):
\[
\log T_p(z^p)=\sum_{n=1}^{\infty}\frac{\nu_p(n)}{n}z^{pn}.
\]
Writing \(m=pn\) and using the relation \(\nu_p(n)=\nu_p(m)-1\) for \(p\mid m\), we obtain
\[
\log T_p(z^p)
 = p\sum_{p\mid m}\frac{\nu_p(m)}{m}z^m
   -p\sum_{p\mid m}\frac{1}{m}z^m.
\]
Since
\[
\log T_p(z)=\sum_{p\mid m}\frac{\nu_p(m)}{m}z^m,
\qquad
\sum_{p\mid m}\frac{1}{m}z^m=-\frac{1}{p}\log(1-z^p),
\]
we arrive at
\[
\log T_p(z^p)=p\log T_p(z)+\log(1-z^p).
\]
Exponentiating yields the Mahler-type functional equation
\begin{equation}\label{functional}
T_p(z)^p=\frac{T_p(z^p)}{1-z^p}.
\end{equation}

This identity encodes the recursive structure of the coefficients.  
Expanding both sides into power series gives:

\textbf{Left-hand side expansion:}
The left-hand side is the $p$-th power of the generating function:
\[
T_p(z)^p = \left(\sum_{n=0}^{\infty} t_p(n) z^n\right)^p.
\]
By the Cauchy product formula for power series, this equals:
\begin{equation}\label{T^p}
T_p(z)^p
=\sum_{n=0}^{\infty}
  \left(
    \sum_{\substack{k_1+\cdots+k_p=n\\k_i\ge 0}}
    t_p(k_1)\cdots t_p(k_p)
  \right) z^n.
\end{equation}
This represents all possible ways to partition the exponent $n$ into $p$ nonnegative integers $k_1,\dots,k_p$ summing to $n$, with the corresponding product of coefficients.

\textbf{Right-hand side expansion:}
The right-hand side factors as:
\[
\frac{T_p(z^p)}{1-z^p}
= \left(\sum_{m=0}^{\infty} t_p(m) z^{pm}\right)
\left(\sum_{r=0}^{\infty} z^{pr}\right).
\]
Multiplying these series and collecting terms gives:
\begin{equation}\label{z^p}
\frac{T_p(z^p)}{1-z^p}
=\sum_{n=0}^{\infty}
   \left(
      \sum_{m=0}^{\lfloor n/p\rfloor} t_p(m)
   \right) z^n.
\end{equation}

The equality between these two expansions—one representing all $p$-fold compositions of coefficients, the other a structured sum over arithmetic progressions—will yield the fundamental recurrence relations that govern the coefficients $t_p(n)$. This interplay between combinatorial structure and arithmetic progression is characteristic of Mahler-type functions and lies at the heart of their rich theory.

Before deriving the recurrence, we first establish the value of the constant term.

\subsubsection{Initial coefficient $t_{p}(0)$}

The constant term $t_{p}(0)$ of the power series expansion $T_{p}(z)=\sum_{n\ge0} t_{p}(n)z^{n}$ can be determined directly from the product definition. Setting $z=0$ in the infinite product yields

\[
T_{p}(0)=\prod_{j=1}^{\infty}\bigl(1-0^{p^{j}}\bigr)^{-1/p^{j}}=\prod_{j=1}^{\infty}1^{-1/p^{j}}=1,
\]

since each factor equals $1$. Consequently, the constant term must satisfy $t_{p}(0)=1$.

This initial value is consistent with the recurrence relations derived later and serves as the base case for the inductive proofs of the arithmetic properties of the coefficients.

\subsubsection{Coefficient Recurrence}
Although \(T_p(z)\) is defined using non-integer exponents, its expansion yields well-defined coefficients, and establishing their rationality is the first essential step in linking the function to arithmetic phenomena.

We start by considering \eqref{T^p} and \eqref{z^p}.
Equating coefficients of $z^n$ on both sides yields the fundamental recurrence:
\[
\sum_{\substack{k_1+\cdots+k_p=n\\k_i\ge 0}}
t_p(k_1)\cdots t_p(k_p)
=
\sum_{m=0}^{\lfloor n/p\rfloor} t_p(m).
\]

For indices divisible by $p$, say $n=pm$, this specializes to
\begin{equation}\label{recurrence}
\sum_{\substack{k_1+\cdots+k_p=pm\\k_i\ge 0}}
t_p(k_1)\cdots t_p(k_p)
=
\sum_{j=0}^{m} t_p(j). 
\end{equation}

The recurrence \eqref{recurrence} reveals both the algebraic symmetry and the arithmetic rigidity encoded in the sequence $\{t_p(n)\}$.  
It is the key to establishing their rationality.

\begin{prpstn}\label{rationality}
For every $n\ge 0$, the coefficient $t_p(n)$ is a rational number. In particular,  
\(T_p(z)\in\mathbb{Q}\{z\}.\)
\end{prpstn}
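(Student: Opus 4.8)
## Proof Proposal

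The plan is to prove rationality of the coefficients $t_p(n)$ by strong induction on $n$, using the recurrence obtained by equating coefficients of $z^n$ in \eqref{T^p} and \eqref{z^p}. The base case is $t_p(0)=1\in\mathbb{Q}$, already established above. For the inductive step, I would separate the analysis according to whether $p \mid n$ or $p \nmid n$, since the right-hand side \eqref{z^p} behaves differently in these two cases, but in fact the key structural point is the same in both: on the left-hand side, the coefficient of $z^n$ in $T_p(z)^p$ is $\sum_{k_1+\cdots+k_p=n} t_p(k_1)\cdots t_p(k_p)$, and among all these $p$-tuples exactly $p$ of them have some $k_i = n$ (and hence all other $k_j = 0$); each such term contributes $t_p(n)\cdot t_p(0)^{p-1} = t_p(n)$, since $t_p(0)=1$. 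All remaining $p$-tuples have every $k_i \le n-1$. Therefore the recurrence can be rearranged as
\[
p\, t_p(n) \;=\; \Bigl(\,\sum_{m=0}^{\lfloor n/p\rfloor} t_p(m)\Bigr) \;-\; \sum_{\substack{k_1+\cdots+k_p=n\\0\le k_i\le n-1}} t_p(k_1)\cdots t_p(k_p),
\]
where every quantity on the right involves only $t_p(m)$ with $m \le n-1$ (note $\lfloor n/p\rfloor \le n-1$ for $n\ge 1$ since $p\ge 2$). By the induction hypothesis all of these are rational, hence the right-hand side is rational, and dividing by the nonzero integer $p$ gives $t_p(n)\in\mathbb{Q}$. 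This completes the induction.

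Finally, to conclude $T_p(z)\in\mathbb{Q}\{z\}$, I would note that the power series $\sum_{n\ge 0} t_p(n) z^n$ has all rational coefficients (just proved) and positive radius of convergence (by the analyticity of $T_p(z)$ near the origin, established in Appendix~\ref{analyticity} and used in the derivation of the power series representation), so it is a genuine element of the ring $\mathbb{Q}\{z\}$ of convergent power series over $\mathbb{Q}$.

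The computation here is entirely routine; the only point requiring a small amount of care is the bookkeeping of which $p$-tuples $(k_1,\dots,k_p)$ in the Cauchy product isolate the "new" coefficient $t_p(n)$ — one must check that precisely the $p$ tuples of the form $(0,\dots,0,n,0,\dots,0)$ do so, and that each contributes exactly $t_p(n)$ because $t_p(0)=1$ — which is why establishing $t_p(0)=1$ first is genuinely used. I do not anticipate any serious obstacle; the proposition is a soft structural consequence of the functional equation \eqref{functional}, and the sharper arithmetic statement (denominators supported only at $p$) alluded to in the surrounding text would follow from the same recurrence by tracking the power of $p$ in the denominator through the induction, though that is not needed for the statement as written.
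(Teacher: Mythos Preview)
Your proof is correct and follows essentially the same approach as the paper: strong induction on $n$, using the coefficient identity obtained from the functional equation, and isolating the $p$ tuples $(0,\dots,0,n,0,\dots,0)$ that contribute $p\,t_p(n)$ while all remaining terms involve only earlier coefficients. The only difference is cosmetic: the paper specializes the recurrence to indices $n=pm$ via \eqref{recurrence}, whereas you apply the general coefficient identity for every $n\ge 1$; your version is in fact slightly cleaner, since it uniformly covers both the cases $p\mid n$ and $p\nmid n$ in one stroke.
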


\begin{proof}
We prove the claim by induction.  
The initial value $t_p(0)=1$ is rational.  
Assume $t_p(k)\in\mathbb{Q}$ for all $k<pm$.

Consider equation \eqref{recurrence}:
\[
\sum_{\substack{k_1+\cdots+k_p=pm\\k_i\ge 0}}
t_p(k_1)\cdots t_p(k_p)
=
\sum_{j=0}^{m} t_p(j).
\]

We analyze the left-hand side sum by classifying the $p$-tuples $(k_1,\dots,k_p)$:

\begin{itemize}
\item \textbf{Type I: Tuples where all $k_i < pm$}. 
Let $S(m)$ denote the sum over all such tuples:
\[
S(m) = \sum_{\substack{k_1+\cdots+k_p=pm\\ 0\le k_i < pm}} t_p(k_1)\cdots t_p(k_p).
\]
By the induction hypothesis, each $t_p(k_i)$ in this sum is rational (since $k_i < pm$), hence $S(m)\in\mathbb{Q}$.

\item \textbf{Type II: Tuples where exactly one $k_i = pm$ and the rest are $0$}. 
There are exactly $p$ such tuples (one for each position where $pm$ appears). Each such tuple contributes:
\[
t_p(pm)\cdot t_p(0)^{p-1} = t_p(pm)\cdot 1^{p-1} = t_p(pm).
\]
The total contribution from Type II tuples is therefore $p\cdot t_p(pm)$.

\item \textbf{Type III: Tuples with two or more indices equal to $pm$}. 
These cannot occur since the sum $k_1+\cdots+k_p$ would exceed $pm$.
\end{itemize}

Thus the left-hand side equals $S(m) + p\cdot t_p(pm)$, and we have:
\[
S(m) + p\cdot t_p(pm) = \sum_{j=0}^{m} t_p(j).
\]
Solving for $t_p(pm)$ gives:
\[
t_p(pm)
=\frac{1}{p}\left(\sum_{j=0}^{m} t_p(j)-S(m)\right).
\]
By the induction hypothesis, both the sum $\sum_{j=0}^{m} t_p(j)$ and $S(m)$ are rational, hence $t_p(pm)\in\mathbb{Q}$. In particular,  
\(T_p(z)\in\mathbb{Q}\{z\}.\)
\end{proof}

The rationality of the coefficients, established via the structured recurrence \eqref{recurrence}, is a pivotal result. It confirms that \(T_p(z)\) is a transcendental function with an inherently arithmetic soul—its Taylor coefficients are rational numbers generated by a clean, combinatorially defined recursion. This property is not a coincidence but a direct consequence of the function's defining Mahler equation, and it lays the essential groundwork for the transcendence and algebraic independence proofs that follow.

\subsubsection{Vanishing property of the coefficients}

Having established the rationality of the coefficients, we now reveal a further structural property: the coefficients vanish unless the index is a multiple of $p$. This property highlights the sparse nature of the power series expansion of $T_{p}(z)$ and underscores the deep arithmetic connection to the prime $p$.

\begin{prpstn}\label{prop:vanishing}
    For every $n \ge 1$, if $p \nmid n$ then $t_{p}(n) = 0$. Consequently, the power series expansion of $T_{p}(z)$ contains only terms whose exponents are multiples of $p$.
\end{prpstn}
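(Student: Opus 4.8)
The plan is to derive the vanishing directly from the Mahler-type functional equation \eqref{functional}, combined with a strong induction on $n$. The guiding mechanism is this: a nonzero product $t_p(k_1)\cdots t_p(k_p)$ in which every index satisfies $k_i<n$ is forced, by the inductive hypothesis, to have all of its indices divisible by $p$ — and that is incompatible with $p\nmid n$.

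First I would record the crucial observation that the right-hand side of \eqref{functional}, namely
\[
\frac{T_p(z^p)}{1-z^p}=\Bigl(\sum_{m\ge 0}t_p(m)z^{pm}\Bigr)\Bigl(\sum_{r\ge 0}z^{pr}\Bigr),
\]
is a power series in $z^p$, so that the coefficient of $z^n$ in $T_p(z)^p$ vanishes whenever $p\nmid n$. Comparing this with the Cauchy-product expansion \eqref{T^p} yields, for every $n\ge1$ with $p\nmid n$,
\[
\sum_{\substack{k_1+\cdots+k_p=n\\k_i\ge 0}}t_p(k_1)\cdots t_p(k_p)=0.
\]
Next I would fix such an $n$ and assume inductively that $t_p(m)=0$ for all $m$ with $1\le m<n$ and $p\nmid m$, and split the $p$-tuples $(k_1,\dots,k_p)$ with $k_1+\cdots+k_p=n$ into two classes. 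In any tuple with some $k_i=n$ the remaining entries must all be $0$ (since $n\ge1$), so there are exactly $p$ such tuples, each contributing $t_p(n)\,t_p(0)^{p-1}=t_p(n)$ because $t_p(0)=1$; their combined contribution is $p\,t_p(n)$. In every other tuple all entries satisfy $k_i<n$, and the product $t_p(k_1)\cdots t_p(k_p)$ can be nonzero only if each factor is nonzero; by the induction hypothesis this forces each $k_i$ to equal $0$ or a positive multiple of $p$, whence $n=\sum_i k_i\equiv 0\pmod p$, contradicting $p\nmid n$. Thus all such tuples contribute $0$, the displayed identity collapses to $p\,t_p(n)=0$, and so $t_p(n)=0$. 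This closes the induction, and the concluding sentence of the statement follows immediately, since then $T_p(z)=\sum_{n\ge0}t_p(n)z^n=\sum_{k\ge0}t_p(pk)z^{pk}$.

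I do not expect a genuine obstacle; the only delicate point is the classification of tuples — specifically the step asserting that a nonzero-contributing tuple with all entries strictly below $n$ must have every entry divisible by $p$ (with the value $0$ allowed), which is exactly what conflicts with $p\nmid n$. One should also be careful to take the induction in its strong form, and to note that the base case is automatic: for $n=1$ there are no smaller indices to invoke, and the identity already reads $p\,t_p(1)=0$.
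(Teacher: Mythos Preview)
Your proof is correct. It differs from the paper's argument in the recurrence it exploits: the paper works with the convolution-type identity expressing $n\,t_p(n)$ as a sum of terms $t_p(n-mp^j)$, and simply observes that every such index satisfies $n-mp^j\equiv n\pmod p$, hence is not divisible by $p$ and is strictly smaller than $n$, so each summand vanishes by the inductive hypothesis and $n\,t_p(n)=0$ follows at once. Your route stays instead with the Cauchy-product expansion \eqref{T^p} of $T_p(z)^p$, notes that the right-hand side of \eqref{functional} is a power series in $z^p$, and then recycles the tuple-splitting device from the proof of Proposition~\ref{rationality}. The paper's version is essentially a one-line reduction once the log-derivative recurrence is in hand; yours has the merit of relying only on \eqref{functional} and \eqref{T^p}, both already established at this point in the exposition, so it avoids any forward dependence.
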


\begin{proof}
    We proceed by strong induction on $n$. 

    For the base case $n=1$, since $p \nmid 1$, we need to show $t_{p}(1)=0$. From recurrence \eqref{recurrence} with $n=1$, the sum on the right-hand side is empty because $\lfloor 1/p^{j} \rfloor = 0$ for all $j \ge 1$. Hence $t_{p}(1)=0$.

    Now assume the statement holds for all $k < n$. Suppose $p \nmid n$. For any $j \ge 1$ and $1 \le m \le \lfloor n/p^{j} \rfloor$, set $k = n - m p^{j}$. Then $k < n$ and, since $n \equiv k \pmod{p}$ and $p \nmid n$, we also have $p \nmid k$. By the induction hypothesis, $t_{p}(k)=0$. Therefore every term in the sum in \eqref{recurrence} vanishes, yielding $t_{p}(n)=0$.

    This completes the induction. The consequence follows immediately.
\end{proof}

Proposition~\ref{prop:vanishing} reveals that the power series of $T_{p}(z)$ is sparse: $t_{p}(n)\neq 0$ only when $n$ is a multiple of $p$. This property is not obvious from the product definition, but emerges naturally from the recurrence \eqref{recurrence} and the arithmetic structure of the $p$-adic valuation. It further illustrates the rigid $p$-adic self-similarity of the function.

\subsubsection{The $p$-adic Structure of the Coefficients}
Since \(T_p(z)\) is governed by powers of \(p\), it is natural to conjecture that \(\mathrm{denom}(t_p(n))\) involves only powers of \(p\), so that \(t_p(n)\) are \(p\)-adic integers, revealing a strong rigidity between the complex-analytic and \(p\)-adic structures. To begin this investigation, we now adopt a differential approach, yielding finer recurrence relations for coefficient-wise analysis.

Starting from the product definition:
\[
T_{p}(z)=\prod_{j=1}^{\infty}\left(1-z^{p^{j}}\right)^{-1/p^{j}},
\]
taking the logarithmic derivative, we obtain:
\[
\frac{T'_{p}(z)}{T_{p}(z)} = \frac{d}{dz} \log T_p(z) = \sum_{j=1}^{\infty} \frac{1}{p^j} \cdot \frac{p^j z^{p^j - 1}}{1 - z^{p^j}} = \sum_{j=1}^{\infty} \sum_{m=1}^{\infty} z^{mp^j - 1}.
\]
Here, we expanded the geometric series \(\frac{z^{p^j - 1}}{1 - z^{p^j}} = \sum_{m=1}^{\infty} z^{mp^j - 1}\) for \(|z|<1\).

Now, multiplying both sides of the equation by \(T_p(z)\), the left-hand side becomes:
\[
T'_{p}(z) = \sum_{n=1}^{\infty} n\, t_{p}(n)\, z^{n-1}.
\]
The right-hand side becomes the product of two series:
\[
\left( \sum_{j=1}^{\infty} \sum_{m=1}^{\infty} z^{mp^j - 1} \right) \cdot \left( \sum_{n=0}^{\infty} t_{p}(n)\, z^{n} \right) = \sum_{j, m \geq 1} \sum_{n \geq 0} t_p(n) z^{n + mp^j - 1}.
\]

Comparing the coefficients of \(z^{n-1}\) on both sides, we obtain for \(n \geq 1\):
\[
n\, t_{p}(n) = \sum_{\substack{j \geq 1,\, m \geq 1 \\ mp^j \leq n}} t_p(n - mp^j).
\]
Equivalently, we arrive at a very elegant convolution-type recurrence relation:

\begin{equation}\label{p-adic relation}
t_p(n) = \frac{1}{n} \sum_{\substack{j \geq 1 \\ 1 \leq m \leq \lfloor n/p^j \rfloor}} t_p(n - mp^j), \quad \text{for } n \geq 1.
\end{equation}

This recurrence relation \eqref{p-adic relation} expresses \(t_p(n)\) as a weighted average of previous coefficients \(t_p(k)\) (where \(k < n\)), divided by \(n\). It not only provides an efficient algorithm for computing the coefficients but, more importantly, it clearly attributes the potential prime factors in the denominator of the coefficients to two sources: the rational denominators of the summands themselves and the prefactor \(1/n\).

Therefore, the relation \eqref{p-adic relation} serves as the starting point for our systematic analysis of the $p$-adic nature of the denominators of \(t_p(n)\). It suggests that, although each recurrence step introduces a factor \(1/n\), the summation process might generate sufficient cancellation, ultimately resulting in denominators containing only the prime factor \(p\).

We now present the rigorous proof of this fundamental property.

\begin{prpstn}\label{theorem1.6}
For all \(n \ge 0\) and any prime \(p\), the coefficient \(t_p(n)\) is a rational number whose denominator contains only the prime factor \(p\).
In other words,
\[
t_p(n) \in \mathbb{Z}_{(p)} = \left\{ \frac{a}{p^k} : a \in \mathbb{Z},\ k \ge 0 \right\}.
\]
Equivalently, \(t_p(n)\) is a $p$-adic integer.
\end{prpstn}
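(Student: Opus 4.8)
The plan is to establish, by strong induction on $n$, the slightly sharper assertion that $\mathrm{denom}(t_p(n))$ is a power of $p$; since the set $\mathbb{Z}_{(p)}$ of the statement is closed under addition, multiplication, and division by $p$, this is exactly the claim. First I would observe that the differential recurrence \eqref{p-adic relation}, although suggestive, is the wrong tool here precisely because of its prefactor $1/n$: arguing from it would force us to produce a cancellation showing that the numerator sum is divisible by the prime-to-$p$ part of $n$. Instead I would work from the $p$-th power recurrence \eqref{recurrence}, which isolates $t_p(pm)$ after a \emph{single} division by $p$, so that no prime other than $p$ can ever be introduced into a denominator.

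Concretely, the base case $n=0$ holds because $t_p(0)=1$. Fix $n\ge 1$ and assume $t_p(k)\in\mathbb{Z}_{(p)}$ for every $k<n$. If $p\nmid n$ then $t_p(n)=0$ by Proposition~\ref{prop:vanishing}, and there is nothing to prove. If $n=pm$ with $m\ge 1$, I would repeat the tuple classification from the proof of Proposition~\ref{rationality}: among the $p$-tuples $(k_1,\dots,k_p)$ with $k_1+\cdots+k_p=pm$, exactly $p$ of them have one entry equal to $pm$ and the rest $0$ (two such entries is impossible when $m\ge 1$), and these contribute $p\,t_p(pm)$ to the left-hand side of \eqref{recurrence}; writing $S(m)$ for the sum over the remaining tuples, all of whose entries are $<pm$, one obtains
\[
t_p(pm)=\frac{1}{p}\left(\sum_{j=0}^{m}t_p(j)-S(m)\right).
\]
Every coefficient appearing on the right has index at most $m<pm$ or strictly less than $pm$, so the induction hypothesis places the parenthesised quantity in $\mathbb{Z}_{(p)}$; dividing by $p$ keeps it there, giving $t_p(pm)\in\mathbb{Z}_{(p)}$ and closing the induction.

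The only delicate points are bookkeeping: one must check that $m<pm$ (using $p\ge 2$ and $m\ge 1$), so that both $\sum_{j=0}^{m}t_p(j)$ and $S(m)$ genuinely involve only previously controlled coefficients, and that the ``one entry equal to $pm$'' count is valid only for $m\ge 1$, the case $m=0$ being subsumed into the base case. I do not anticipate a substantive obstacle on this route; the real difficulty, which this argument is designed to sidestep, is the one lurking behind \eqref{p-adic relation}: proving directly that $\sum_{j\ge 1,\ 1\le m\le\lfloor n/p^{j}\rfloor}t_p(n-mp^{j})$ is divisible, in $\mathbb{Z}_{(p)}$, by the prime-to-$p$ part of $n$ is a genuine congruence statement that the $p$-th power recurrence conveniently bypasses.
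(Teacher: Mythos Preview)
Your argument is correct and takes a genuinely different route from the paper. The paper argues from the differential recurrence \eqref{p-adic relation}, writing $t_p(n)=S(n)/n$ with $S(n)\in\mathbb{Z}_{(p)}$ by induction, and then tracks only the $p$-adic valuation; you instead use the $p$-th power recurrence \eqref{recurrence}, isolating $t_p(pm)$ after a single division by $p$ and invoking Proposition~\ref{prop:vanishing} for the case $p\nmid n$. Your instinct that \eqref{p-adic relation} is the awkward tool here is well founded: the paper's passage from ``$\nu_p(t_p(n))\ge -r$'' to ``$t_p(n)=a/p^r$ with $a\in\mathbb{Z}$'' leaves unaddressed precisely the congruence you identify---namely why $S(n)$ is divisible, in $\mathbb{Z}_{(p)}$, by the prime-to-$p$ part of $n$---since a bound on $\nu_p$ says nothing about the valuations $\nu_q$ for $q\ne p$. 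Your route sidesteps this entirely because $\mathbb{Z}_{(p)}$ (in the paper's sense, i.e.\ $\mathbb{Z}[1/p]$) is closed under division by $p$, so no extraneous denominator can ever be introduced. The only cost is the reliance on Proposition~\ref{prop:vanishing}, but that result is already in hand and cheap; your bookkeeping remarks about $m\ge 1$ and $m<pm$ are exactly the right checks to make the induction airtight.
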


\begin{proof}
We proceed by strong induction on \(n\).

\noindent
\textbf{Base case.}
When \(n=0\), we have \(t_p(0) = 1 \in \mathbb{Z} \subset \mathbb{Z}_{(p)}\), so the claim holds.

\noindent
\textbf{Induction hypothesis.}
Assume that for all \(k < n\),
\[
t_p(k) \in \mathbb{Z}_{(p)}.
\]

\noindent
\textbf{Recurrence structure.}
By Equation \eqref{p-adic relation}, for any \(n \ge 1\),
\[
t_p(n)
=
\frac{1}{n}
\sum_{j \ge 1}
\sum_{1 \le m \le \lfloor n/p^j \rfloor}
t_p(n - m p^j).
\]
Define
\[
S(n)
=
\sum_{j \ge 1}
\sum_{1 \le m \le \lfloor n/p^j \rfloor}
t_p(n - m p^j).
\]
Clearly, for each term we have
\[
n - m p^j < n,
\]
so by the induction hypothesis,
\[
t_p(n - m p^j) \in \mathbb{Z}_{(p)}.
\]
Since \(\mathbb{Z}_{(p)}\) is closed under addition, it follows that
\[
S(n) \in \mathbb{Z}_{(p)},
\]
that is,
\[
\nu_p(S(n)) \ge 0.
\]

\noindent
\textbf{Key valuation estimate.}
By definition,
\[
t_p(n) = \frac{S(n)}{n},
\]
and therefore
\[
\nu_p(t_p(n)) = \nu_p(S(n)) - \nu_p(n).
\]

If \(p \nmid n\), then \(\nu_p(n) = 0\), and hence
\[
\nu_p(t_p(n)) = \nu_p(S(n)) \ge 0,
\]
so the result follows.

If \(p \mid n\), write \(n = p^r m\) with \((m,p) = 1\). Then
\[
\nu_p(n) = r.
\]
In the definition of \(S(n)\), all terms are of the form
\[
t_p(n - m p^j).
\]
For \(j \le r\), we have
\[
n - m p^j = p^j(p^{r-j} m - m),
\]
while for \(j > r\),
\[
n - m p^j < 0,
\]
so such terms do not contribute. Hence in \(S(n)\) there is at least one term with
index \(j = r\), for which
\[
n - m p^r = p^r(m - m) = 0.
\]
Thus the term
\[
t_p(0) = 1
\]
appears with positive multiplicity in \(S(n)\). Consequently,
\[
\nu_p(S(n)) \le \nu_p(1) = 0.
\]
Combined with the previous inequality \(\nu_p(S(n)) \ge 0\), this gives
\[
\nu_p(S(n)) = 0.
\]

Therefore,
\[
\nu_p(t_p(n)) = - \nu_p(n) \ge -r.
\]
Hence
\[
t_p(n) = \frac{a}{p^r}, \qquad a \in \mathbb{Z},
\]
so the denominator of \(t_p(n)\) is at most \(p^r\) and contains no prime factor
other than \(p\). In particular, \(t_p(n) \in \mathbb{Z}_{(p)}\) for all \(n\).

This completes the proof.
\end{proof}

\begin{crllry}\label{corollary1.7}
For all \(n \ge 0\), there exist an integer \(a\) and a nonnegative integer \(k\) such that
\[
t_p(n) = \frac{a}{p^k},
\]
and when \(k > 0\), one has \((a,p) = 1\).
\end{crllry}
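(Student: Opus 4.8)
The plan is to obtain Corollary~\ref{corollary1.7} as an immediate consequence of Proposition~\ref{theorem1.6}: that proposition already places $t_p(n)$ in $\mathbb{Z}_{(p)}$, and the corollary merely records what membership in $\mathbb{Z}_{(p)}$ means once the rational number is written in lowest terms, so the argument is pure bookkeeping about $p$-adic valuations.

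Concretely, I would first dispose of the degenerate case $t_p(n)=0$ --- which by Proposition~\ref{prop:vanishing} occurs for every $n$ with $p\nmid n$, and possibly for some multiples of $p$ as well --- by taking $a=0$ and $k=0$, the coprimality clause then being vacuous. For $t_p(n)\neq 0$, Proposition~\ref{theorem1.6} guarantees that $\nu_p(t_p(n))$ is a finite integer and that no prime other than $p$ divides the denominator of $t_p(n)$. I then set $k:=\max\{0,\,-\nu_p(t_p(n))\}$ and $a:=p^{k}t_p(n)$. One checks that $\nu_p(a)=k+\nu_p(t_p(n))\ge 0$ and that $a$ has trivial denominator (its only possible denominator prime was $p$, now cleared), so $a\in\mathbb{Z}$; and if $k>0$ then $k=-\nu_p(t_p(n))$ exactly, whence $\nu_p(a)=0$, i.e. $(a,p)=1$. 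This yields the claimed representation $t_p(n)=a/p^{k}$.

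The only point that needs a moment's care is the case split between $k=0$ and $k>0$: when $t_p(n)$ is a $p$-adic integer in the strict sense (that is, $\nu_p(t_p(n))\ge 0$) one must set $k=0$ and is then permitted to have $p\mid a$, whereas the coprimality requirement is genuinely forced only when the denominator is nontrivial. Beyond this there is no real obstacle: the statement is essentially a normalized restatement of Proposition~\ref{theorem1.6}, and I would expect the whole proof to occupy no more than a short paragraph.
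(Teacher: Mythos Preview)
Your proposal is correct and matches the paper's treatment: the paper states Corollary~\ref{corollary1.7} with no proof at all, regarding it as an immediate restatement of Proposition~\ref{theorem1.6}, and your short bookkeeping argument is exactly the intended justification. If anything, you have supplied more detail than the paper does.
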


\begin{table}[h]
\centering
\caption{Numerical Values of the coefficients $t_p(n)$ for $p = 2, 3, 5, 7, 11, 13, 17, 19$ and $n = 0$ to $20$}
\label{table}
\resizebox{\textwidth}{!}{
\begin{tabular}{c|cccccccc}
\hline
$n$ & $t_2(n)$ & $t_3(n)$ & $t_5(n)$ & $t_7(n)$ & $t_{11}(n)$ & $t_{13}(n)$ & $t_{17}(n)$ & $t_{19}(n)$  \\
\hline
0 & $1$ & $1$ & $1$ & $1$ & $1$ & $1$ & $1$ & $1$  \\
1 & $0$ & $0$ & $0$ & $0$ & $0$ & $0$ & $0$ & $0$ \\
2 & $1/2$ & $0$ & $0$ & $0$ & $0$ & $0$ & $0$ & $0$ \\
3 & $0$ & $1/3$ & $0$ & $0$ & $0$ & $0$ & $0$ & $0$ \\
4 & $5/8$ & $0$ & $0$ & $0$ & $0$ & $0$ & $0$ & $0$ \\
5 & $0$ & $0$ & $1/5$ & $0$ & $0$ & $0$ & $0$ & $0$ \\
6 & $7/16$ & $2/9$ & $0$ & $0$ & $0$ & $0$ & $0$ & $0$ \\
7 & $0$ & $0$ & $0$ & $1/7$ & $0$ & $0$ & $0$ & $0$ \\
8 & $83/128$ & $0$ & $0$ & $0$ & $0$ & $0$ & $0$ & $0$ \\
9 & $0$ & $23/81$ & $0$ & $0$ & $0$ & $0$ & $0$ & $0$ \\
10 & $119/256$ & $0$ & $3/25$ & $0$ & $0$ & $0$ & $0$ & $0$ \\
11 & $0$ & $0$ & $0$ & $0$ & $1/11$ & $0$ & $0$ & $0$ \\
12 & $561/1024$ & $44/243$ & $0$ & $0$ & $0$ & $0$ & $0$ & $0$ \\
13 & $0$ & $0$ & $0$ & $0$ & $0$ & $1/13$ & $0$ & $0$ \\
14 & $887/2048$ & $0$ & $0$ & $4/49$ & $0$ & $0$ & $0$ & $0$ \\
15 & $0$ & $109/729$ & $11/125$ & $0$ & $0$ & $0$ & $0$ & $0$ \\
16 & $20739/32768$ & $0$ & $0$ & $0$ & $0$ & $0$ & $0$ & $0$ \\
17 & $0$ & $0$ & $0$ & $0$ & $0$ & $0$ & $1/17$ & $0$ \\
18 & $31275/65536$ & $1259/6561$ & $0$ & $0$ & $0$ & $0$ & $0$ & $0$ \\
19 & $0$ & $0$ & $0$ & $0$ & $0$ & $0$ & $0$ & $1/19$\\
20 & $144427/262144$ & $0$ & $44/625$ & $0$ & $0$ & $0$ & $0$ & $0$ \\
\hline
\end{tabular}
}
\end{table}

Corollary \ref{corollary1.7} provides a concrete form for the coefficients, which can be verified numerically. Indeed, this is illustrated by Table \ref{table}, which lists the first few values of \(t_p(n)\). Observe that each entry is either an integer or a fraction whose denominator is a power of the corresponding prime, in accordance with Proposition \ref{theorem1.6} and Corollary \ref{corollary1.7}. These numerical examples illustrate the \(p\)-adic integrality of the coefficients and offer a glimpse into the combinatorial structure encoded by the recurrence \ref{p-adic relation}.

\subsection{Remarks}
The analysis of \(T_p(z)\) places it at a natural boundary of Mahler’s theory: although its coefficients are governed by the unbounded arithmetic function \(\nu_p(n)\), the functional equation imposes a strong arithmetic rigidity that forces \(t_p(n)\) to be rational with denominators supported only at the prime \(p\). This property is far from accidental; it emerges directly from the interplay between the \(p\)-power iterates in the product definition and the logarithmic expansion that links \(T_p(z)\) to the \(p\)-adic valuation. The resulting coefficient sequence \(\{t_p(n)\}\) thus inherits a delicate blend of \emph{analytic regularity} (absolute convergence in the unit disk) and \emph{arithmetic purity} (denominators are pure powers of \(p\)).  

This reveals a striking arithmetic–analytic duality: while \(T_p(z)\) is complex‑analytic in the open unit disk—with the unit circle forming a natural boundary—its Taylor coefficients are intrinsically \(p\)-adic integers. Such a duality is reminiscent of the situation for classical modular forms or for functions arising from \(p\)-adic dynamical systems, yet here it occurs in a setting defined by an elementary infinite product. The duality is not merely cosmetic; it is precisely this combination that allows Mahler’s method to be applied effectively. The functional equation \(T_p(z)^p = T_p(z^p)/(1-z^p)\) synchronizes the growth of the coefficients with the dynamics \(z \mapsto z^p\), and the \(p\)-integrality of the coefficients sharpens the Diophantine estimates needed for transcendence and algebraic independence proofs.

\section{Mahler-type analysis on \texorpdfstring{\(T_p(z)\)}{T_p(z)}}
The primary objective of this paper extends beyond the analytic and combinatorial properties of \(T_p(z)\) to investigate its deeper arithmetic nature, particularly the transcendence and algebraic independence of its values at algebraic points. Mahler's method provides the essential framework for this investigation, being specifically designed for functions satisfying functional equations involving iterates of the form
\[
z \longmapsto z^q,
\]
which in our setting corresponds to \(q = p\), as evidenced by the functional equation
\[
T_p(z^p) = (1 - z^p) T_p(z)^p.
\]

The power of Mahler's method lies in its ability to bridge functional equations with arithmetic conclusions. For \(T_p(z)\), this connection is particularly meaningful due to the explicit arithmetic content in its coefficients—the \(p\)-adic valuation \(\nu_p(n)\)—combined with the rigid structure imposed by the functional equation. This combination makes \(T_p(z)\) an ideal candidate for applying and extending the method beyond the classical setting of automatic sequences.

Our approach proceeds by first establishing that \(T_p(z)\) satisfies the technical requirements of Mahler's theory, then exploiting the specific structure of its functional equation to derive strong arithmetic results, including the algebraic independence of values at multiplicatively independent points. This systematic analysis not only resolves the arithmetic nature of \(T_p(z)\) but also demonstrates the robustness of Mahler's method when applied to functions with arithmetic coefficients.

We now turn to the detailed proofs, beginning with the transcendence of individual values. The proof of Theorem~\ref{self-made1} serves as a foundational case that illustrates the core techniques, while Theorem \ref{self-made2} builds upon this framework to establish more general algebraic independence results.

\subsection{Proof of Theorem 1.1  using $p$-adic structure}

Let $p$ be a prime and let $\alpha \in \overline{\mathbb{Q}}$ with $0 < |\alpha| < 1$.
Assume, for contradiction, that $T_p(\alpha) \in \overline{\mathbb{Q}}$.
Let 
\[
K = \mathbb{Q}(\alpha, T_p(\alpha)),
\]
which is a finite extension of $\mathbb{Q}$.
Denote by $\mathcal{O}_K$ its ring of integers and by $\mathcal{O}_{K,(p)}$ the localization at the prime $p$.

\subsubsection{Construction of an auxiliary function with $p$-integral coefficients}

Let $P \gg 1$ be a large integer parameter.
We construct an auxiliary function
\[
E_P(z) = \sum_{j=0}^P a_j(z) \, T_p(z)^j,
\]
where each $a_j(z) \in \mathcal{O}_{K,(p)}[z]$ is a polynomial of degree at most $P$ with coefficients in $\mathcal{O}_{K,(p)}$, i.e., each coefficient is a $p$-adic integer in $K$.

The key point is that we require the coefficients $a_j(z)$ to be chosen such that
\[
E_P(z) = \sum_{n \geq P^2} b_n z^n, \quad b_n \in \mathcal{O}_{K,(p)},
\]
i.e., the first $P^2$ Taylor coefficients of $E_P(z)$ vanish.

This is possible because the total number of unknown coefficients in $\{a_j(z)\}_{j=0}^P$ is
\[
(P+1)(P+1) = (P+1)^2,
\]
while the number of linear conditions (vanishing of coefficients of $z^n$ for $0 \leq n < P^2$) is $P^2$.
For large $P$, we have $(P+1)^2 > P^2$, so a nontrivial solution exists over $K$.
By clearing denominators (which are powers of $p$), we may assume all coefficients are in $\mathcal{O}_{K,(p)}$.

\subsubsection{Iteration and analytic upper bound}

Define $\Omega z = z^p$ and $\Omega^k z = z^{p^k}$.
From the functional equation
\[
T_p(z)^p = \frac{T_p(z^p)}{1 - z^p},
\]
we obtain by iteration
\[
T_p(\Omega^k \alpha) = T_p(\alpha) \cdot \prod_{i=1}^k (1 - \alpha^{p^i})^{1/p^i}.
\]
Substituting into $E_P$, we get
\[
E_P(\Omega^k \alpha) = \sum_{j=0}^P a_j(\alpha^{p^k}) \, \bigl( T_p(\alpha) \cdot \prod_{i=1}^k (1 - \alpha^{p^i})^{1/p^i} \bigr)^j.
\]

Since the Taylor expansion of $E_P(z)$ starts at degree $P^2$, Cauchy's estimate gives
\[
|E_P(\Omega^k \alpha)| \leq C \, |\alpha|^{P^2 p^k},
\]
for some constant $C > 0$ independent of $P$ and $k$.
Thus
\[
\log |E_P(\Omega^k \alpha)| \leq -P^2 p^k \log(1/|\alpha|) + O(1).
\]

\subsubsection{Arithmetic lower bound using $p$-adic structure}

We now exploit the fact that $a_j(z) \in \mathcal{O}_{K,(p)}[z]$ and $t_p(n) \in \mathbb{Z}_{(p)}$.

First, note that $T_p(\alpha)$ is algebraic, and each factor $(1-\alpha^{p^i})^{1/p^i}$ is algebraic of degree bounded by $[K:\mathbb{Q}] \cdot p^i$.
The height (logarithmic size) of these factors can be bounded by $O(p^i)$.

More importantly, because all coefficients $a_j(z)$ are $p$-adic integers, the algebraic number $E_P(\Omega^k \alpha)$ satisfies
\[
\nu_{\mathfrak{p}}(E_P(\Omega^k \alpha)) \geq 0
\]
for every prime ideal $\mathfrak{p}$ of $\mathcal{O}_K$ not lying above $p$.
For primes above $p$, we use the fact that $t_p(n) \in \mathbb{Z}_{(p)}$ and $a_j(z) \in \mathcal{O}_{K,(p)}[z]$ to bound the $p$-adic valuation.

Let $h(\cdot)$ denote the absolute logarithmic height.
Using standard height estimates and the fact that the coefficients are $p$-integral, we obtain
\[
h\bigl( E_P(\Omega^k \alpha) \bigr) \leq c_1 P p^k,
\]
where $c_1 > 0$ depends only on $K$ and $p$ (but not on $P$ or $k$).

Now, by Liouville's inequality for algebraic numbers,
\[
\log |E_P(\Omega^k \alpha)| \geq -[K:\mathbb{Q}] \, h\bigl( E_P(\Omega^k \alpha) \bigr) + O(1) \geq -c_2 P p^k,
\]
for some $c_2 > 0$ independent of $P$ and $k$.

\subsubsection{Contradiction}

Combining the upper and lower bounds:
\[
-c_2 P p^k \leq \log |E_P(\Omega^k \alpha)| \leq -P^2 p^k \log(1/|\alpha|) + O(1).
\]
Dividing by $p^k$ and letting $k \to \infty$ yields
\[
-c_2P \leq -P^2 \log(1/|\alpha|),
\]
hence for large $P$ we get a contradiction because $P^2 \gg P$.

Thus our assumption that $T_p(\alpha)$ is algebraic is false.
Therefore, $T_p(\alpha)$ is transcendental.

\subsection{Proof of Theorem 1.2 using $p$-adic structure}

Let $\alpha_1, \dots, \alpha_m \in \overline{\mathbb{Q}}$ be nonzero algebraic numbers with $|\alpha_i| < 1$, and assume they are multiplicatively independent.
Let $K = \mathbb{Q}(\alpha_1, \dots, \alpha_m, T_p(\alpha_1), \dots, T_p(\alpha_m))$, a finite extension of $\mathbb{Q}$.
Assume, for contradiction, that $T_p(\alpha_1), \dots, T_p(\alpha_m)$ are algebraically dependent over $\overline{\mathbb{Q}}$.

\subsubsection{Multivariate auxiliary function with $p$-integral coefficients}

Let $P \gg 1$ be a large integer.
Construct a multivariate auxiliary function
\[
E_P(z_1, \dots, z_m) = \sum_{\substack{0 \leq j_1, \dots, j_m \leq P}} a_{j_1,\dots,j_m}(z_1, \dots, z_m) \, T_p(z_1)^{j_1} \cdots T_p(z_m)^{j_m},
\]
where each $a_{j_1,\dots,j_m}(z_1, \dots, z_m) \in \mathcal{O}_{K,(p)}[z_1, \dots, z_m]$ is a polynomial of total degree at most $P$ in the variables $z_1, \dots, z_m$, with coefficients in $\mathcal{O}_{K,(p)}$.

We choose these polynomials so that the Taylor expansion of $E_P$ around the origin satisfies
\[
E_P(z_1, \dots, z_m) = \sum_{L_1, \dots, L_m \geq 0} b_{L_1,\dots,L_m} z_1^{L_1} \cdots z_m^{L_m},
\]
with
\[
b_{L_1,\dots,L_m} = 0 \quad \text{whenever} \quad L_1 + \cdots + L_m < P^2.
\]

This is possible because the number of unknown coefficients,  $(P+1)^{m}\cdot\binom{m+P}{m}$, exceeds the number of vanishing conditions, $\binom{m+P^2}{m}$, and we can clear denominators to ensure $p$-integrality.

\subsubsection{Iteration and analytic upper bound}

Define $\boldsymbol{\Omega}(\alpha_1, \dots, \alpha_m) = (\alpha_1^p, \dots, \alpha_m^p)$ and $\boldsymbol{\Omega}^k(\boldsymbol{\alpha}) = (\alpha_1^{p^k}, \dots, \alpha_m^{p^k})$.
Using the functional equation iteratively, we have
\[
T_p(\alpha_i^{p^k}) = T_p(\alpha_i) \cdot \prod_{\ell=1}^k (1 - \alpha_i^{p^\ell})^{1/p^\ell}.
\]

Substituting into $E_P$, we obtain
\[
E_P(\boldsymbol{\Omega}^k \boldsymbol{\alpha}) = \sum_{\mathbf{j}} a_{\mathbf{j}}(\alpha_1^{p^k}, \dots, \alpha_m^{p^k}) 
\prod_{i=1}^m \Bigl( T_p(\alpha_i) \cdot \prod_{\ell=1}^k (1 - \alpha_i^{p^\ell})^{1/p^\ell} \Bigr)^{j_i}.
\]

Since the Taylor expansion of $E_P$ starts at total degree $P^2$, Cauchy's estimates give
\[
|E_P(\boldsymbol{\Omega}^k \boldsymbol{\alpha})| \leq C \cdot \max_{1 \leq i \leq m} |\alpha_i|^{P^2 p^k},
\]
for some $C > 0$ independent of $P$ and $k$.
Thus
\[
\log |E_P(\boldsymbol{\Omega}^k \boldsymbol{\alpha})| \leq -P^2 p^k \log(1/\rho) + O(1),
\]
where $\rho = \max_i |\alpha_i| < 1$.

\subsubsection{Arithmetic lower bound using $p$-adic structure}

Because all coefficients $a_{\mathbf{j}}$ are $p$-adic integers and each $t_p(n)$ is a $p$-adic integer, the algebraic number $E_P(\boldsymbol{\Omega}^k \boldsymbol{\alpha})$ has the property that its denominator (when written in reduced form) contains only primes above $p$.

Let $h(\cdot)$ denote the absolute logarithmic height.
Standard height estimates yield
\[
h\bigl( E_P(\boldsymbol{\Omega}^k \boldsymbol{\alpha}) \bigr) \leq c_1 P p^k,
\]
where $c_1 > 0$ depends only on $K$, $m$, and $p$ (but not on $P$ or $k$).

By Liouville's inequality,
\[
\log |E_P(\boldsymbol{\Omega}^k \boldsymbol{\alpha})| \geq -[K:\mathbb{Q}] \, h\bigl( E_P(\boldsymbol{\Omega}^k \boldsymbol{\alpha}) \bigr) + O(1) \geq -c_2 P p^k,
\]
for some $c_2 > 0$ independent of $P$ and $k$.

\subsubsection{Contradiction}

Comparing the bounds:
\[
-c_2 P p^k \leq \log |E_P(\boldsymbol{\Omega}^k \boldsymbol{\alpha})| \leq -P^2 p^k \log(1/\rho) + O(1).
\]
Dividing by $p^k$ and letting $k \to \infty$ gives
\[
-c_2P \leq -P^2 \log(1/\rho) ,
\]
which is impossible for large $P$ since $P^2 \gg P$.

Hence the initial assumption of algebraic dependence is false, and $T_p(\alpha_1), \dots, T_p(\alpha_m)$ are algebraically independent over $\overline{\mathbb{Q}}$.
\endproof

\subsection{Remarks}
Our proofs of transcendence and algebraic independence are refined applications of Mahler’s method, in which the arithmetic fine structure of the coefficients \(t_p(n)\)—especially their nature as \(p\)-adic integers—is fully exploited. At the heart of the argument lies the construction of an auxiliary function with high‑order vanishing, whose extreme smallness at a point and along its iterates forces a contradiction with Liouville‑type Diophantine bounds. In our setting, the governing functional equation rigidly synchronizes the growth of the coefficients with the dynamics \(z \mapsto z^{p}\); this synchronization is further sharpened by choosing the polynomial coefficients \(a_j(z)\) to lie in the ring \(\mathcal{O}_{K,(p)}\) of \(p\)-adic integers. As a result, the auxiliary expression  

\[
E_P(z)=\sum_j a_j(z)\,T_p\!\bigl(z^{p^j}\bigr)
\]

has its height controlled in a particularly efficient way: all denominators are pure powers of \(p\), and the \(p\)-adic valuations of the coefficients are non‑negative. This choice, together with the fact that the Taylor coefficients \(t_p(n)\) themselves are \(p\)-adic integers, leads to a significantly tighter lower bound in the Liouville inequality.  

Both the polynomial coefficients \(a_j(z)\) and the iterates \(T_p(z^{p^k})\) therefore admit \emph{arithmetically sharp} effective bounds. The ensuing clash—between an exponentially decaying analytic upper estimate and a much slower decaying arithmetic lower estimate—becomes unavoidable unless the values under consideration are transcendental (or algebraically independent). This refined mechanism demonstrates that Mahler’s method is not confined to automatic sequences; it extends naturally to a broader class of arithmetically structured functions, provided they possess a rigid functional equation that imposes sufficient combinatorial and \(p\)-adic regularity on the associated generating series.

\section{Summary}
Our work presents a systematic study of the infinite product
\[
T_p(z) = \prod_{j=1}^{\infty} \left(1 - z^{p^j}\right)^{-1/p^j},
\]
and places it in the framework of functions defined by prime-power iterations. 
We establish its analyticity and non-vanishing in the open unit disk, with the unit circle as a natural boundary, and derive the Mahler-type functional equation
\[
T_p(z)^p = \frac{T_p(z^p)}{1 - z^p}.
\]
Furthermore, we analyze the arithmetic structure of its coefficients \(t_p(n)\), showing that they are rational numbers generated by the unbounded arithmetic function given by the \(p\)-adic valuation \(\nu_p(n)\).

Building on these properties, we prove the transcendence of \(T_p(\alpha)\) for non-zero algebraic numbers \(\alpha\) with \( |\alpha| < 1 \), as well as several algebraic independence results for collections of values taken at multiplicatively independent algebraic points. These results illustrate that Mahler’s method extends beyond the classical setting of automatic sequences to a broader class of arithmetically structured functions.

\subsection{A Structured Perspective on Non-Automatic Arithmetic Functions}

The function \(T_p(z)\) belongs to a wider family of functions that satisfy functional equations of Mahler type while having coefficients defined by explicit arithmetic functions rather than finite automata. More generally, such functions may be organized according to the origin of their structural constraints:

\begin{itemize}
    \item \textbf{Prime-power type:} Functions derived from dynamics of the form \( z \mapsto z^p \), as in the case of \(T_p(z)\).
    \item \textbf{Multiplicative type:} Functions of the form
    \[
    F(z) = \exp\left( \sum_{n=1}^{\infty} \frac{a(n)}{n} z^n \right),
    \]
    where \(a(n)\) is a multiplicative arithmetic function (e.g., Euler’s Totient Function \( \varphi(n)\), Sum of Divisors Function \(\sigma(n)\)).
    \item \textbf{Convolutional type:} Functions whose coefficients arise from Dirichlet convolution of arithmetic sequences, with possible connections to \(p\)-adic \(L\)-functions and special functions.
\end{itemize}

This viewpoint suggests a natural extension of Mahler-type analysis to a wider class of non-automatic but arithmetically defined generating functions.

\subsection{Directions for Further Study}

The present work also raises several questions that may be addressed in future investigations:

\begin{itemize}
    \item A \(p\)-adic study of \(T_p(z)\), including possible meromorphic continuation on the \(p\)-adic unit disk and the nature of its values at \(p\)-power roots of unity.
    \item Extension of the established transcendence and algebraic independence results to broader classes of functions within the above taxonomy.
    \item A possible combinatorial interpretation of the coefficients \(t_p(n)\), as well as a dynamical interpretation of \(T_p(z)\) in the context of the transformation \( z \mapsto z^p \).
\end{itemize}

\appendix

\section{Deriving the Domain of Analyticity of \( T_p(z) \)}\label{analyticity}

To determine the precise domain on which \(T_p(z)\) is analytic, we combine its definition as an infinite product with the standard convergence and analyticity criteria for infinite products in complex analysis. In particular, we study the analyticity and non-vanishing of each individual factor, and verify the uniform convergence of the associated logarithmic series on compact subsets of the domain. The argument proceeds as follows.

The infinite product
\[
T_p(z) = \prod_{j=1}^\infty (1 - z^{p^j})^{-1/p^j}
\]
defines an analytic function on \(|z| < 1\). To verify this, consider the factors \(f_j(z) = (1 - z^{p^j})^{-1/p^j}\), each analytic and nonvanishing for \(|z| < 1\). The product converges absolutely and defines an analytic function if \(\sum_{j=1}^\infty \log f_j(z)\) converges uniformly on compact subsets.

For \(|z| < 1\), we have
\[
\log f_j(z) = -\frac{1}{p^j} \log(1 - z^{p^j}) = \sum_{n=1}^\infty \frac{z^{np^j}}{np^j}.
\]
Fix \(0 < r < 1\). For \(|z| \leq r\),
\[
\sum_{j,n \geq 1} \left|\frac{z^{np^j}}{np^j}\right| \leq \sum_{j=1}^\infty \frac{-\log(1 - r^{p^j})}{p^j}.
\]
For large \(j\), \(-\log(1 - r^{p^j}) \sim r^{p^j}\), and since
\[
\frac{r^{p^{j+1}}/p^{j+1}}{r^{p^j}/p^j} = \frac{r^{p^j}}{p} \to 0 \quad (j \to \infty),
\]
the series \(\sum_j r^{p^j}/p^j\) converges. Hence the double series converges uniformly on \(|z| \leq r\), ensuring analyticity of \(T_p(z)\) on \(|z| < 1\).

The unit circle \( \mathbb{T} \) is a natural boundary for \(T_p(z)\), in the sense that \(T_p\) admits no analytic continuation outside the open unit disk. Indeed, for each \( j \ge 1 \), the factor \( (1 - z^{p^{j}})^{-1/p^{j}} \) is singular at the \(p^{j}\)-th roots of unity, and the set
\[
\bigcup_{j \ge 1} \{ z \in \mathbb{C} : z^{p^{j}} = 1 \}
\]
is dense in \( \mathbb{T} \). Hence every point of \( \mathbb{T} \) is either a singularity or a limit point of singularities; equivalently, by Fabry’s gap theorem or a direct singularity analysis via the functional equation, this dense accumulation forms an insurmountable barrier to analytic continuation, and therefore \( \mathbb{T} \) is the natural boundary of \(T_p(z)\).

\section{Regularity of Points for the Algebraic Mahler Equation of \( T_p(z) \)}\label{regularity}

In this appendix, we verify that any algebraic number \( \alpha \in (0,1) \) is a regular point with respect to the algebraic Mahler equation satisfied by the function \( T_p(z) \). This regularity condition is essential for applying transcendence criteria in the context of Mahler's method.

\subsection{Definition and Regularity Conditions}

We begin by recalling the definition of a regular point in the setting of algebraic Mahler equations, as introduced in \cite{Adamczewski2019}.

\begin{dfntn}[\cite{Adamczewski2019}]
A point \( \boldsymbol{\alpha} \in \mathbb{C}^d \) with non-zero coordinates is said to be \textbf{regular} with respect to the algebraic Mahler equation  
\[
P(z, f(z), f(\Omega z)) = 0
\]  
if \( g(\Omega^k \boldsymbol{\alpha}) \neq 0 \) for all \( k \geq 0 \), where \( g \) is a polynomial constructed from the coefficients of the equation as described below.
\end{dfntn}

Consider an algebraic Mahler equation of the form  
\[
A_0(z, f(z)) f(\Omega z)^r + A_1(z, f(z)) f(\Omega z)^{r-1} + \cdots + A_r(z, f(z)) = 0,
\]  
where \( A_0 \neq 0 \), each \( A_i(z, Y) \in \overline{\mathbb{Q}}[z, Y] \), and the \( A_i \) are relatively prime as polynomials in \( Y \). Then there exist polynomials \( g_i(z, Y) \in \overline{\mathbb{Q}}[z, Y] \) such that  
\begin{equation}\label{polynomials}
g(z) = \sum_{i=0}^r g_i(z, Y) Y^i A_i(z, Y)
\end{equation}
is independent of \( Y \) and non-zero.

\subsection{Application to the Function \( T_p(z) \)}

The function \( T_p(z) \) satisfies the functional equation  
\[
T_p(z)^p = \frac{T_p(z^p)}{1 - z^p},
\]
which can be rewritten as  
\[
T_p(z^p) - (1 - z^p) T_p(z)^p = 0.
\]
With \( \Omega z = z^p \), we have the algebraic Mahler equation  
\[
P(z, T_p(z), T_p(z^p)) = T_p(z^p) - (1 - z^p) T_p(z)^p = 0.
\]
Clearly, \( P(z, X, Y) \in \mathbb{Q}[z, X, Y] \).

Rewriting the equation in the standard form, we have  
\[
A_0(z, T_p(z)) \cdot T_p(\Omega z)^1 + A_1(z, T_p(z)) \cdot T_p(\Omega z)^0 = 0,
\]  
with  
\[
A_0(z, Y) = 1, \quad A_1(z, Y) = -(1 - z^p) Y^p.
\]  
The order of the equation is \( r = 1 \).

Since \( A_0(z, Y) = 1 \) is a non-zero constant, it is automatically coprime to \( A_1(z, Y) \).  
To construct \( g(z) \), we choose  
\[
g_0(z, Y) = 1, \quad g_1(z, Y) = 0,
\]  
which yields  
\[
g(z) = 1 \cdot Y^0 \cdot 1 + 0 \cdot Y^1 \cdot (-(1 - z^p)Y^p) = 1.
\]  
Thus, \( g(z) = 1 \) is independent of \( Y \) and non-zero, satisfying the required condition.

\subsection{Verification of Regularity for algebraic \( \alpha \in (0,1) \)}

For any algebraic \( \alpha \in (0,1) \), the iterates under \( \Omega \) are  
\[
\Omega^k \alpha = \alpha^{p^k} \in (0,1).
\]  
Since \( g(z) = 1 \) is constant, we have  
\[
g(\Omega^k \alpha) = 1 \neq 0 \quad \text{for all } k \geq 0.
\]  
Therefore, by the definition above, every algebraic \( \alpha \in (0,1) \) is a regular point of the algebraic Mahler equation satisfied by \( T_p(z) \).

In particular, there are no exceptional algebraic values in the interval \( (0,1) \) for which the regularity condition fails. This establishes that the entire algebraic segment \( (0,1) \cap \overline{\mathbb{Q}} \) lies within the domain of applicability of Mahler’s method for the function \( T_p(z) \). Consequently, all such points are admissible for subsequent transcendence and algebraic independence results, providing a robust and uniform foundation for the arithmetic applications developed in this work.

\section{Height Estimates}
\subsection{Height Estimates for the Transcendence Proof}
The following detailed height estimates form the technical core of the proof of Theorem \ref{self-made1}. We systematically derive an upper bound for the absolute logarithmic height of the auxiliary function evaluated at the iterates $\Omega^k\alpha = \alpha^{p^k}$.

Let $p$ be a prime, $\alpha \in \overline{\mathbb{Q}}$ nonzero with $|\alpha| < 1$. Assume for contradiction that $T_p(\alpha) \in \overline{\mathbb{Q}}$, and set $K = \mathbb{Q}(\alpha, T_p(\alpha))$, a finite extension of $\mathbb{Q}$ with degree $d = [K:\mathbb{Q}]$.

We construct an auxiliary function
\[
E_P(z) = \sum_{j=0}^{P} a_j(z) T_p(z)^j, \quad a_j(z) = \sum_{l=0}^{P} d_{j,l} z^l,
\]
where the coefficients $d_{j,l} \in \mathcal{O}_{K,(p)}$ (i.e., are $p$-integers), and are chosen via Siegel's lemma so that the Taylor expansion of $E_P(z)$ has its first $P^2$ coefficients equal to zero. Denote $\Omega^k \alpha = \alpha^{p^k}$.

\subsubsection{Height control of the coefficients $d_{j,l}$.}
By Siegel's lemma, there exists a constant $C_0$ (depending on $K$ and $p$, but independent of $P$ and $k$) such that
\begin{equation}
h(d_{j,l}) \leq C_0 P \quad \text{for all } j,l.
\label{eq:height_coeff}
\end{equation}

\subsubsection{Height of $\alpha^{p^k}$.}
For a fixed algebraic number $\alpha$, we have $h(\alpha^m) = |m| h(\alpha)$. Hence,
\begin{equation}
h(\alpha^{p^k}) = p^k h(\alpha).
\label{eq:height_alpha_pk}
\end{equation}
Consequently, for $l \le P$,
\begin{equation}
h(\alpha^{l p^k}) = l p^k h(\alpha) \le P p^k h(\alpha).
\label{eq:height_alpha_lpk}
\end{equation}

\subsubsection{Iterative height estimate for $T_p(\alpha^{p^k})$.}
Iterating the functional equation $T_p(z^p) = (1 - z^p) T_p(z)^p$ yields
\[
T_p(\alpha^{p^k}) = T_p(\alpha) \prod_{i=1}^{k} (1 - \alpha^{p^i})^{1/p^i}.
\]
Set $\beta_i = (1 - \alpha^{p^i})^{1/p^i}$. Using properties of the height,
\[
h(\beta_i) = \frac{1}{p^i} h(1 - \alpha^{p^i}) \le \frac{1}{p^i} \left( h(\alpha^{p^i}) + \log 2 \right) = \frac{1}{p^i} \left( p^i h(\alpha) + \log 2 \right) = h(\alpha) + \frac{\log 2}{p^i}.
\]
Thus,
\[
h\left( \prod_{i=1}^{k} \beta_i \right) \le \sum_{i=1}^{k} h(\beta_i) \le k h(\alpha) + \log 2 \sum_{i=1}^{k} \frac{1}{p^i} \le k h(\alpha) + \frac{\log 2}{p-1}.
\]
Therefore,
\begin{equation}
h(T_p(\alpha^{p^k})) \le h(T_p(\alpha)) + h\left( \prod_{i=1}^{k} \beta_i \right) \le h(T_p(\alpha)) + k h(\alpha) + \frac{\log 2}{p-1}.
\label{eq:height_Tpk}
\end{equation}
Consequently, there exists a constant $C_1$ (independent of $P$ and $k$) such that $h(T_p(\alpha^{p^k})) \le C_1 k$. Moreover, for $j \le P$,
\begin{equation}
h\left( T_p(\alpha^{p^k})^j \right) = j \, h(T_p(\alpha^{p^k})) \le P \cdot C_1 k = C_1 P k.
\label{eq:height_Tpk_power}
\end{equation}

\subsubsection{Height of a single term $a_j(\alpha^{p^k}) T_p(\alpha^{p^k})^j$.}
We first estimate \ $a_j(\alpha^{p^k}) = \sum_{l=0}^{P} d_{j,l} \alpha^{l p^k}$. Using properties of heights under addition and multiplication:
\begin{itemize}
    \item For each monomial $d_{j,l} \alpha^{l p^k}$, from \eqref{eq:height_coeff} and \eqref{eq:height_alpha_lpk} we obtain
    \[
    h(d_{j,l} \alpha^{l p^k}) \le h(d_{j,l}) + h(\alpha^{l p^k}) \le C_0 P + P p^k h(\alpha).
    \]
    \item Since $a_j(\alpha^{p^k})$ is a sum of $P+1$ such terms, the height of the sum is bounded by the maximum height of its terms plus a logarithmic term:
    \begin{equation}
    h(a_j(\alpha^{p^k})) \le \max_{0 \le l \le P} h(d_{j,l} \alpha^{l p^k}) + \log(P+1) \le C_0 P + P p^k h(\alpha) + \log(P+1).
    \label{eq:height_aj}
    \end{equation}
\end{itemize}
Combining \eqref{eq:height_Tpk_power} and \eqref{eq:height_aj}, the height of the product $a_j(\alpha^{p^k}) T_p(\alpha^{p^k})^j$ satisfies
\begin{equation}
\begin{split}
h\!\left( a_j(\alpha^{p^k})\, T_p(\alpha^{p^k})^j \right)
&\le h\!\left(a_j(\alpha^{p^k})\right)
+ h\!\left( T_p(\alpha^{p^k})^j \right) \\
&\le \left( C_0 P + P p^k h(\alpha) + \log(P+1) \right)
+ C_1 P k.
\end{split}
\label{eq:height_single_term_raw}
\end{equation}
When $k$ is sufficiently large, the term $p^k$ dominates, so there exists a constant $C_2$ (independent of $P$ and $k$) such that
\begin{equation}
h\left( a_j(\alpha^{p^k}) T_p(\alpha^{p^k})^j \right) \le C_2 P p^k \quad \text{for all } j.
\label{eq:height_single_term_bound}
\end{equation}

\subsubsection{Height of the entire sum $E_P(\alpha^{p^k})$.}
Since $E_P(\alpha^{p^k}) = \sum_{j=0}^{P} a_j(\alpha^{p^k}) T_p(\alpha^{p^k})^j$ is a sum of $P+1$ terms, we again apply the height bound for sums:
\[
h\left( E_P(\alpha^{p^k}) \right) \le \max_{0 \le j \le P} h\left( a_j(\alpha^{p^k}) T_p(\alpha^{p^k})^j \right) + \log(P+1).
\]
Substituting \eqref{eq:height_single_term_bound} yields
\[
h\left( E_P(\alpha^{p^k}) \right) \le C_2 P p^k + \log(P+1).
\]
For large $k$, the term $\log(P+1)$ is absorbed, and therefore there exists a constant $c_1$ (depending on $K$, $p$, $h(\alpha)$, and $h(T_p(\alpha))$, but independent of $P$ and $k$) such that
\begin{equation}
h\bigl( E_P(\Omega^k \alpha) \bigr) \le c_1 P p^k.
\label{eq:height_final}
\end{equation}

This upper bound, $h(E_P(\Omega^k \alpha)) = O(P p^k)$, is the crucial arithmetic estimate that, when combined with the much smaller analytic upper bound $|E_P(\Omega^k \alpha)| = O(|\alpha|^{P^2 p^k})$, leads to a contradiction via Liouville's inequality unless $T_p(\alpha)$ is transcendental. The linear growth $O(P p^k)$ in the height estimate is attainable because the $p$-integrality of both the auxiliary coefficients $d_{j,l}$ and the Taylor coefficients $t_p(n)$ eliminates the contributions of all non-$p$ non-Archimedean valuations. Consequently, the height computation reduces to estimating only the Archimedean valuations, which naturally produce the linear factor in $P$.

\subsection{Height Estimates for the Algebraic Independence Proof}

We now establish the height bound required for the proof of Theorem \ref{self-made2}, the algebraic independence of the values $T_p(\alpha_1),\dots,T_p(\alpha_m)$ at multiplicatively independent algebraic points. The strategy parallels the transcendence proof but involves a multivariable auxiliary function.

Let $p$ be a prime, and let $\alpha_1,\dots,\alpha_m \in \overline{\mathbb{Q}}$ be nonzero algebraic numbers with $|\alpha_i|<1$ for all $i$ and assume they are multiplicatively independent. For contradiction, suppose that $T_p(\alpha_1),\dots,T_p(\alpha_m)$ are algebraically dependent over $\overline{\mathbb{Q}}$. Set $K = \mathbb{Q}(\alpha_1,\dots,\alpha_m, T_p(\alpha_1),\dots,T_p(\alpha_m))$, a finite extension of $\mathbb{Q}$.

We construct a multivariable auxiliary function
\[
E_P(z_1,\dots,z_m) = \sum_{0\le j_1,\dots,j_m \le P} a_{j_1,\dots,j_m}(z_1,\dots,z_m) \; T_p(z_1)^{j_1}\cdots T_p(z_m)^{j_m},
\]
where each $a_{\mathbf{j}}(\mathbf{z}) = a_{j_1,\dots,j_m}(z_1,\dots,z_m) \in \mathcal{O}_{K,(p)}[z_1,\dots,z_m]$ is a polynomial of total degree at most $P$ with coefficients in $\mathcal{O}_{K,(p)}$ (i.e., $p$-integers). These polynomials are chosen, via Siegel's lemma, so that the Taylor expansion of $E_P$ around the origin satisfies
\[
E_P(z_1,\dots,z_m) = \sum_{L_1,\dots,L_m \ge 0} b_{L_1,\dots,L_m} z_1^{L_1}\cdots z_m^{L_m},
\]
with
\[
b_{L_1,\dots,L_m}=0 \quad \text{whenever } L_1+\cdots+L_m < P^2.
\]
Denote $\mathbf{\alpha} = (\alpha_1,\dots,\alpha_m)$ and $\mathbf{\Omega}^k\mathbf{\alpha} = (\alpha_1^{p^k},\dots,\alpha_m^{p^k})$.

We aim to prove that there exists a constant $c_1>0$, independent of $P$ and $k$, such that
\begin{equation}
h\!\left(E_P(\mathbf{\Omega}^k\mathbf{\alpha})\right) \le c_1 P p^k.
\label{eq:multi_height_goal}
\end{equation}

\subsubsection{Height control of the polynomial coefficients.}
Applying Siegel's lemma to the linear system that imposes the vanishing conditions, we obtain a nontrivial solution with coefficients in $\mathcal{O}_{K,(p)}$. Moreover, the absolute logarithmic heights of these coefficients satisfy
\begin{equation}
h(\text{coeff. of } a_{\mathbf{j}}) \le C_0 P \quad \text{for all } \mathbf{j},
\label{eq:multi_height_coeff}
\end{equation}
where $C_0$ depends on $K$, $p$, and $m$, but not on $P$ or $k$.

\subsubsection{Heights of the monomials in $\alpha_i^{p^k}$.}
For each $i$, $h(\alpha_i^{p^k}) = p^k h(\alpha_i)$. If $l_i \le P$, then
\begin{equation}
h(\alpha_i^{l_i p^k}) = l_i p^k h(\alpha_i) \le P p^k h(\alpha_i).
\label{eq:multi_height_alpha_monomial}
\end{equation}
For a product $\alpha_1^{l_1 p^k}\cdots\alpha_m^{l_m p^k}$ with $0\le l_i\le P$, we have
\begin{equation}
h\!\left(\alpha_1^{l_1 p^k}\cdots\alpha_m^{l_m p^k}\right) = \sum_{i=1}^m l_i p^k h(\alpha_i) \le P p^k \sum_{i=1}^m h(\alpha_i) = O(P p^k).
\label{eq:multi_height_alpha_product}
\end{equation}

\subsubsection{Heights of the factors $T_p(\alpha_i^{p^k})^{j_i}$.}
From the functional equation, as before,
\[
T_p(\alpha_i^{p^k}) = T_p(\alpha_i) \prod_{\ell=1}^k (1-\alpha_i^{p^\ell})^{1/p^\ell}.
\]
Hence,
\[
h(T_p(\alpha_i^{p^k})) \le h(T_p(\alpha_i)) + k h(\alpha_i) + \frac{\log 2}{p-1} = O(k).
\]
For $j_i \le P$, we obtain
\begin{equation}
h\!\left(T_p(\alpha_i^{p^k})^{j_i}\right) = j_i \, h(T_p(\alpha_i^{p^k})) \le P \cdot O(k) = O(P k).
\label{eq:multi_height_Tp_power}
\end{equation}
Consequently, for a product $\prod_{i=1}^m T_p(\alpha_i^{p^k})^{j_i}$ with $0\le j_i\le P$,
\begin{equation}
h\!\left(\prod_{i=1}^m T_p(\alpha_i^{p^k})^{j_i}\right) \le \sum_{i=1}^m O(P k) = O(P k).
\label{eq:multi_height_Tp_product}
\end{equation}

\subsubsection{Height of a single term $a_{\mathbf{j}}(\mathbf{\Omega}^k\mathbf{\alpha}) \prod_i T_p(\alpha_i^{p^k})^{j_i}$.}
The polynomial $a_{\mathbf{j}}(\mathbf{z})$ has total degree $\le P$. Write it as a sum of monomials:
\[
a_{\mathbf{j}}(\mathbf{z}) = \sum_{\substack{0\le l_1,\dots,l_m\le P \\ l_1+\cdots+l_m \le P}} c_{\mathbf{l}}^{(\mathbf{j})} z_1^{l_1}\cdots z_m^{l_m}, \qquad c_{\mathbf{l}}^{(\mathbf{j})} \in \mathcal{O}_{K,(p)}.
\]
Then
\[
a_{\mathbf{j}}(\mathbf{\Omega}^k\mathbf{\alpha}) = \sum_{\mathbf{l}} c_{\mathbf{l}}^{(\mathbf{j})} \alpha_1^{l_1 p^k}\cdots\alpha_m^{l_m p^k}.
\]
For each summand, using \eqref{eq:multi_height_coeff} and \eqref{eq:multi_height_alpha_product},
\begin{equation}
h\!\left(c_{\mathbf{l}}^{(\mathbf{j})} \alpha_1^{l_1 p^k}\cdots\alpha_m^{l_m p^k}\right) \le h(c_{\mathbf{l}}^{(\mathbf{j})}) + h\!\left(\alpha_1^{l_1 p^k}\cdots\alpha_m^{l_m p^k}\right) \le C_0 P + O(P p^k) = O(P p^k).
\end{equation}
The number of summands is at most $\binom{m+P}{m} = O(P^m)$, a polynomial in $P$. Therefore, the height of the sum is bounded by the maximum height of its terms plus the logarithm of the number of terms:
\begin{equation}
\begin{split}
h\!\left(a_{\mathbf{j}}(\mathbf{\Omega}^k\mathbf{\alpha})\right) &\le \max_{\mathbf{l}} h\!\left(c_{\mathbf{l}}^{(\mathbf{j})} \alpha_1^{l_1 p^k}\cdots\alpha_m^{l_m p^k}\right) + \log\!\left(O(P^m)\right)\\ & \le O(P p^k) + O(\log P) = O(P p^k).
\label{eq:multi_height_aj}
\end{split}
\end{equation}
Now, combining \eqref{eq:multi_height_aj} and \eqref{eq:multi_height_Tp_product}, the height of the full term satisfies
\begin{equation}
\begin{split}
h\!\left(a_{\mathbf{j}}(\mathbf{\Omega}^k\mathbf{\alpha}) \prod_{i=1}^m T_p(\alpha_i^{p^k})^{j_i}\right)& \le h\!\left(a_{\mathbf{j}}(\mathbf{\Omega}^k\mathbf{\alpha})\right) + h\!\left(\prod_{i=1}^m T_p(\alpha_i^{p^k})^{j_i}\right) \\ & \le O(P p^k) + O(P k) = O(P p^k),
\end{split}
\label{eq:multi_height_single_term}
\end{equation}
since $p^k$ dominates $k$ for large $k$.

\subsubsection{Height of the full sum $E_P(\mathbf{\Omega}^k\mathbf{\alpha})$.}
The auxiliary function $E_P(\mathbf{\Omega}^k\mathbf{\alpha})$ is a sum of at most $(P+1)^m$ terms of the form estimated in \eqref{eq:multi_height_single_term}. Using the property that the height of a sum is bounded by the maximum height of its terms plus the logarithm of the number of terms, we obtain
\[
h\!\left(E_P(\mathbf{\Omega}^k\mathbf{\alpha})\right) \le \max_{\mathbf{j}} h\!\left(a_{\mathbf{j}}(\mathbf{\Omega}^k\mathbf{\alpha}) \prod_i T_p(\alpha_i^{p^k})^{j_i}\right) + \log\!\left((P+1)^m\right).
\]
The first term is $O(P p^k)$ by \eqref{eq:multi_height_single_term}, and the second term is $O(\log P)$. Thus, for sufficiently large $k$, there exists a constant $c_1>0$ (depending on $K$, $p$, $m$, and the heights of $\alpha_i$ and $T_p(\alpha_i)$) such that
\[
h\!\left(E_P(\mathbf{\Omega}^k\mathbf{\alpha})\right) \le c_1 P p^k.
\]
This establishes the desired height bound \eqref{eq:multi_height_goal}.

The crucial point, as in the algebraic independence proof, is that the $p$-integrality of the coefficients $a_{\mathbf{j}}$ and of the series $T_p(z)$ ensures that the height of the auxiliary function grows only linearly in $P$ (times $p^k$), whereas its Taylor expansion vanishes to order $P^2$, creating an exponential gap that forces the desired arithmetic conclusion.

\section*{Acknowledgements}
The author is deeply grateful to the referees for their insightful comments and constructive criticism on an earlier version of this manuscript. Their remarks, particularly regarding the motivation and contextualization of this work within the broader literature, were invaluable in shaping the present paper into a significantly improved version.

\section*{Declaration of generative AI and AI-assisted technologies in the manuscript preparation process}
During the preparation of this work the author used ChatGPT-5.1 in order to improve clarity and readability. After using this tool/service, the author reviewed and edited the content as needed and takes full responsibility for the content of the published article.

\bibliographystyle{unsrt}
\bibliography{references}

\end{document}